\let\@fnsymbol\@arabic
\definecolor{marin}{rgb}{0.,0.3,0.7}
\newcommand{\ep}{\varepsilon}
\newcommand{\Z}{\mathbb{Z}}
\newcommand{\N}{\mathbb{N}}
\newcommand{\C}{\mathbb{C}}
\newcommand{\dbf}{\mathbf{d}}
\newcommand{\ubf}{\mathbf{u}}
\newcommand{\vbf}{\mathbf{v}}
\newcommand{\zbf}{\mathbf{z}}
\newcommand{\Abf}{\mathbf{A}}
\newcommand{\Bbf}{\mathbf{B}}
\newcommand{\Kbf}{\mathbf{K}}
\newcommand{\Labf}{\boldsymbol{\Lambda}}
\newcommand{\Phbf}{\boldsymbol{\Phi}}
\providecommand{\abs}[1]{\lvert#1\rvert}
\providecommand{\absbig}[1]{\bigl\lvert#1\bigr\rvert}
\providecommand{\absbigg}[1]{\biggl\lvert#1\biggr\rvert}
\providecommand{\absBig}[1]{\Bigl\lvert#1\Bigr\rvert}
\providecommand{\kla}[1]{(#1)}
\providecommand{\klabig}[1]{\bigl(#1\bigr)}
\providecommand{\klabigg}[1]{\biggl(#1\biggr)}
\providecommand{\klaBig}[1]{\Bigl(#1\Bigr)}
\providecommand{\norm}[1]{\lVert#1\rVert}
\providecommand{\normbig}[1]{\bigl\lVert#1\bigr\rVert}
\providecommand{\normbigg}[1]{\biggl\lVert#1\biggr\rVert}
\providecommand{\normv}[1]{\ensuremath{{\lvert\hskip-1pt\lvert\hskip-1pt\lvert}#1{\rvert\hskip-1pt\rvert\hskip-1pt\rvert}}}
\providecommand{\normvbig}[1]{\ensuremath{{\bigl\lvert\hskip-1pt\bigl\lvert\hskip-1pt\bigl\lvert}#1{\bigr\rvert\hskip-1pt\bigr\rvert\hskip-1pt\bigr\rvert}}}
\providecommand{\normvbigg}[1]{\ensuremath{{\biggl\lvert\hskip-1pt\biggl\lvert\hskip-1pt\biggl\lvert}#1{\biggr\rvert\hskip-1pt\biggr\rvert\hskip-1pt\biggr\rvert}}}
\providecommand{\normvBig}[1]{\ensuremath{{\Bigl\lvert\hskip-1pt\Bigl\lvert\hskip-1pt\Bigl\lvert}#1{\Bigr\rvert\hskip-1pt\Bigr\rvert\hskip-1pt\Bigr\rvert}}}
\providecommand{\skla}[1]{\langle#1\rangle}
\newcommand{\formulatext}[1]{\qquad\text{#1}\qquad}
\newcommand\myfor{\formulatext{for}}
\newcommand\myand{\formulatext{and}}
\newcommand{\iu}{\mathrm{i}}
\newcommand{\e}{\mathrm{e}}
\newcommand{\drm}{\mathrm{d}}
\DeclareMathOperator{\ReT}{Re}
\DeclareMathOperator{\sinc}{sinc}
\numberwithin{equation}{section}
\newtheorem{mytheorem}{Theorem}[section]
\newtheorem{mylemma}[mytheorem]{Lemma}
\theoremstyle{definition}
\newtheorem{algorithm}[mytheorem]{Algorithm}
\newtheorem{mydefinition}[mytheorem]{Definition}
\newtheorem*{definition*}{Definition}
\newtheorem*{remark*}{Remark}
\newtheorem*{approximationansatz}{Approximation ansatz}
\newtheorem*{approximationansatz2}{Approximation ansatz, part 2}
\newtheorem*{system}{System for the modulation functions}
\newtheorem*{system2}{System for the modulation coefficients functions}
\newcommand{\disc}{\mathcal{K}}
\newcommand{\Ec}{\mathcal{E}}
\newcommand{\np}{{\hat{n}}}
\newcommand{\sfrac}[2]{\mbox{\footnotesize$\displaystyle\frac{#1}{#2}$}}
\newcommand{\nt}{\alpha}
\DeclareSymbolFontAlphabet{\mathbb}{AMSb}
\DeclareSymbolFontAlphabet{\mathbbl}{bbold}
\newcommand{\zhat}{\mathbbl{z}}
\newcommand{\vhat}{\mathbbl{v}}
\newcommand{\ghat}{\mathbbl{g}}
\newcommand{\Ahat}{\mathbbl{A}}
\newcommand{\Bhat}{\mathbbl{B}}
\newcommand{\Fhat}{\mathbbl{F}}
\newcommand{\Phihat}{\mathbbl{\Phi}}
\newcommand{\idhat}{\mathbbl{1}}
\newcommand{\zind}{\zhat}
\newcommand{\vind}{\vhat}
\title{High-order splitting integrators for nonlinear Schr\"odinger equations over long times}
\author{Ludwig Gauckler\,\thanks{Institut f\"ur Mathematik,
          Freie Universit\"at Berlin,
          Arnimallee 9,
          D-14195 Berlin, Germany
          ({\tt gauckler@math.fu-berlin.de}).}
}
\date{Version of 28 February 2018}
\begin{document}

\maketitle

\begin{abstract}
The long-time behaviour of splitting integrators applied to nonlinear Schr\"o\-din\-ger equations in a weakly nonlinear setting is studied. It is proven that the energy is nearly conserved on long time intervals. The analysis includes all consistent splitting integrators with real-valued coefficients, in particular splitting integrators of high order. The proof is based on a completely resonant modulated Fourier expansion in time of the numerical solution. \\[1.5ex]
\textbf{Mathematics Subject Classification (2010):} 
65P10, % Numerical Analysis -> Numerical problems in dynamical systems -> Hamiltonian systems including symplectic integrators 
65P40, %Numerical Analysis -> Numerical problems in dynamical systems -> Nonlinear stabilities 
65M70.\\[1.5ex] % Numerical Analysis ->  Partial differential equations, initial value and time-dependent initial-boundary value problems -> Spectral, collocation and related methods
\textbf{Keywords:} Nonlinear Schr\"odinger equation, split-step Fourier method, splitting integrators, high order, energy conservation, long time intervals, modulated Fourier expansions.
\end{abstract}

\section{Introduction}

The long-time behaviour of symplectic integrators applied to (non-oscillatory) Ha\-mil\-to\-ni\-an ordinary differential equations is by now well understood. Their long-time behaviour in the case of Hamiltonian partial differential equations is, however, less clear. One of the open questions is the behaviour of higher order integrators for such equations, and this is the topic of the present paper which studies high-order splitting integrators \cite{Blanes2016,Hairer2006,McLachlan2002}.

As a model problem of a Hamiltonian partial differential equation, we consider the cubic nonlinear Schr\"odinger equation 
\begin{equation}\label{eq-nls}
\iu \partial_t \psi = - \partial_{x}^2 \psi + \abs{\psi}^2 \psi, \qquad \psi=\psi(x,t).
\end{equation}
We consider this equation in a weakly nonlinear regime of small initial values~$\psi(\cdot,0)$ at time~$t=0$. In space, we impose $2\pi$-periodic boundary conditions in one dimension, i.e., $x\in\mathbb{T}=\mathbb{R}/(2\pi\mathbb{Z})$. The conserved energy of this Hamiltonian partial differential equation is
\[
E\klabig{\psi(\cdot,t)} = \frac{1}{2\pi} \int_{\mathbb{T}} \abs{\partial_x \psi(x,t)}^2\,\drm x + \frac1{4\pi} \int_{\mathbb{T}} \abs{\psi(x,t)}^4\,\drm x.
\]

For this equation, long-time near-conservation of energy and other long-time properties have been studied and proven for the first-order Lie--Trotter and second-order Strang splitting integrators in \cite{Bambusi2013,Faou2012,Faou2014,Faou2011,Faou2009a,Faou2009b,Gauckler2017a,Gauckler2010a,Gauckler2010b} (when combined with a Fourier collocation in space).
In the present paper, we prove long-time near-conservation of energy for arbitrary splitting integrators, in particular splitting integrators of order higher than two. Such higher order splitting integrators are successfully applied to nonlinear Schr\"odinger equations and have been analyzed on bounded time intervals, see, for example, \cite{Bao2005,Koch2013a,Thalhammer2012a,Thalhammer2009}, but their long-time behaviour in the case of nonlinear Schr\"odinger equations (or Hamiltonian differential equations in general) has not been studied so far. 

We finally mention a different line of research \cite{Barletti2018,Brugnano2016,Celledoni2012}, where energy-preserving (but not necessarily symplectic and explicit) methods are constructed, used and analyzed.

The paper is organized as follows. In Section~\ref{sec-method} we introduce the considered splitting integrators of arbitrary order. The main result on the long-time behaviour of these integrators is stated in the following Section~\ref{sec-main}. The proof of this result is given in Sections~\ref{sec-mfe}--\ref{sec-long} using modulated Fourier expansions in time. We conclude in Section~\ref{sec-conclusion} by pointing out extensions of the developed theory.

\section{Splitting integrators}\label{sec-method}

For the numerical discretization of the nonlinear Schr\"odinger equation~\eqref{eq-nls}, we consider splitting integrators in time applied to a pseudo-spectral (Fourier) discretization in space. In particular and in contrast to previous work on the long-time behaviour of numerical methods, we allow for higher order splitting integrators.

We denote by~$h$ the time step-size and by~$t_n=n h$ discrete times. The considered fully discrete methods compute approximations $\psi^n$ to the exact solution at time~$t_n$ which are trigonometric polynomials of degree~$K$:
\[
\psi(x,t_n) \approx \psi^n(x) = \sum_{j\in\disc} \psi_j^n \, \e^{\iu j x}, \qquad \disc = \bigl\{-K,-K+1,\dots,K-1\bigr\}. 
\]
To describe the time stepping from~$\psi^n$ to~$\psi^{n+1}$ with a splitting integrator, we introduce the time-$\nt$ flows~$\phi^\nt_{A}$ and~$\phi^\nt_{B}$ of the differential equations
\[
\iu \partial_t \psi = A \psi \myand \iu \partial_t \psi = B (\psi)
\]
with the linear operator $A\psi = -\partial_x^2\psi$ and the nonlinear operator $B(\psi) = \mathcal{Q}^K \klabig{\abs{\psi}^2 \psi}$, where~$\mathcal{Q}^K$ denotes the trigonometric interpolation by trigonometric polynomials of degree~$K$. 
These two differential equations split the nonlinear Schr\"odinger equation, or more precisely its pseudo-spectral Fourier discretization in space, into two parts. 
With the corresponding flows~$\phi^\nt_{A}$ and~$\phi^\nt_{B}$ a splitting integrator reads
\begin{equation}\label{eq-method}
  \psi^{n+1} = \phi^{a_1h}_{A} \circ \phi^{b_1h}_{B} \circ \phi^{a_2h}_{A} \circ \phi^{b_2h}_{B} \circ \dotsm \circ \phi^{a_sh}_{A} \circ \phi^{b_sh}_{B} (\psi^n)
\end{equation}
with coefficients $a_1,\dots,a_s$ and $b_1,\dots,b_s$.
The initial value $\psi(\cdot,0)$ of the nonlinear Schr\"odinger equation~\eqref{eq-nls} at time $t_0=0$ is approximated by
$\psi^0 = \mathcal{Q}^K \kla{\psi(\cdot,0)}$.
The computational efficiency of such splitting methods comes from the fact that the flows can be computed exactly via
\begin{equation}\label{eq-flows}
  \phi^{\nt}_{A} (\eta) = \sum_{j\in\disc} \e^{-\iu j^2 \nt} \, \eta_j \, \e^{\iu j x} \myand \phi^{\nt}_{B} (\eta) = \mathcal{Q}^K \klabig{\e^{-\iu \nt \abs{\eta}^2} \eta }
\end{equation}
for $\eta(x) = \sum_{j\in\disc} \eta_j \e^{\iu j x}$.

An interesting question is of course how to choose the coefficients $a_1,\dots,a_s$ and $b_1,\dots,b_s$ in the method~\eqref{eq-method}. The standard choices are the (first-order) Lie--Trotter splittings 
(first variant: $s=1$, $a_1 = b_1 = 1$; second variant: $s = 2$, $a_1=b_2=0$, $a_2=b_1=1$)
and the (symmetric and second-order) Strang splittings 
(first variant: $s=2$, $a_1=a_2=1/2$, $b_1=1$, $b_2=0$; second variant: $s=2$, $a_1=0$, $a_2=1$, $b_1=b_2=1/2$).
It turns out, however, that also methods~\eqref{eq-method} with order higher than two are successful.
Classical examples are the methods of Suzuki~\cite{Suzuki1990} and Yoshida~\cite{Yoshida1990}, and particularly successful methods have been derived for example by Blanes~\& Moan~\cite{Blanes2002}.
For a general introduction to (higher order) splitting integrators we refer to \cite[Chapter~3]{Blanes2016}, \cite[Chapters~II and III]{Hairer2006} and \cite{McLachlan2002}; for many competitive examples of such methods we refer to \cite[Section~3.7]{Blanes2016}, \cite[Table~1]{Thalhammer2009} and the references therein; and for their use in the case of Schr\"odinger equations we refer to \cite{Bao2005,Koch2013a,Thalhammer2012a,Thalhammer2009}, for example.

All the splitting integrators mentioned above are covered by the analysis to be presented below. We only assume throughout the paper that the coefficients $a_1,\dots,a_s$ and $b_1,\dots,b_s$ are real-valued and that the method is consistent, i.e.,
\begin{equation}\label{eq-consistency}
a_1 + a_2 + \dots + a_s = 1 \myand b_1 + b_2 + \dots + b_s = 1.
\end{equation}

\section{Statement of the main result}\label{sec-main}

We denote by~$\norm{\eta}_1$ the Sobolev $H^1(\mathbb{T})$-norm of $\eta(x) = \sum_{j\in\Z} \eta_j \e^{\iu j x}$:
\begin{equation}\label{eq-norm}
\norm{\eta}_1^2 = \frac{1}{2\pi} \int_{\mathbb{T}} \klabig{ \abs{\eta(x)}^2 + \abs{\partial_x\eta(x)}^2 } \,\drm x = \sum_{j\in\mathbb{Z}} \skla{j}^{2} \abs{\eta_j}^2
\end{equation}
with the weights
\begin{equation}\label{eq-skla}
\skla{j} = \sqrt{\abs{j}^2+1}.
\end{equation}
Our main result is the following theorem on the long-time near-conservation of this norm by the numerical method~\eqref{eq-method}. The proof of this theorem will be given in Sections~\ref{sec-mfe}--\ref{sec-long} below. 

\begin{mytheorem}\label{thm-main}
Let~$N\ge 2$ and assume that the CFL-type step-size restriction
\begin{equation}\label{eq-cfl}
(N+1) h K^2 \le c_0 < 2\pi
\end{equation}
on the discretization parameters~$h$ and~$K$ holds. For a small initial value 
\begin{equation}\label{eq-init}
\norm{\psi^0}_1 \le \ep
\end{equation}
with $\ep\le \ep_0$ the splitting integrator~\eqref{eq-method} then nearly preserves the squared $H^1(\mathbb{T})$-norm on long time intervals:
\[
\ep^{-2} \absbig{\norm{\psi^n}_1^2 - \norm{\psi^0}_1^2} \le C \ep \myfor 0\le t_n=n h \le \ep^{-N+1}.
\]
The constants~$C$ and~$\ep_0$ depend on~$N$ and~$c_0$ from~\eqref{eq-cfl}, on the number of stages~$s$ and on an upper bound on the absolute values of the coefficients $b_1,\dots,b_s$ of the method~\eqref{eq-method}, but they are independent of the size~$\ep$ of the initial value, the time step-size~$h$ and the spatial discretization parameter~$K$.
\end{mytheorem}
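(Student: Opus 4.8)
The plan is to control the numerical solution by a \emph{completely resonant modulated Fourier expansion} in time and then to conserve a suitable almost-invariant of this expansion over long times; the argument splits into constructing and bounding the expansion, extracting an almost-invariant that agrees with $\norm{\psi^n}_1^2$ to leading order, and patching the short-time information across $0\le t_n\le\ep^{-N+1}$. Working with the Fourier coefficients of $\psi^n=\sum_{j\in\disc}\psi_j^n\,\e^{\iu jx}$, I would seek
\[
  \psi_j^n=\sum_{\kbf}z_j^{\kbf}(\ep^2 t_n)\,\e^{-\iu(\omega\cdot\kbf)\,t_n},\qquad \omega\cdot\kbf=\sum_{a\in\disc}a^2\,k_a,
\]
over multi-indices $\kbf=(k_a)_{a\in\disc}$, with modulation functions $z_j^{\kbf}$ depending only on the slow time $\tau=\ep^2 t_n$. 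The frequencies $\omega_a=a^2$ are exactly the per-step phases of the linear flow $\phi^{\nt}_A$ in \eqref{eq-flows}; being integers, they make the expansion completely resonant, and the leading contribution is the unit index $(j)$ with $k_a=\delta_{a,j}$, carrying the free oscillation $\e^{-\iu j^2 t_n}$. Inserting the ansatz into one step \eqref{eq-method}, using the closed forms \eqref{eq-flows} and Taylor-expanding the nonlinear phase $\e^{-\iu\nt\abs{\eta}^2}$, and comparing coefficients of each frequency $\e^{-\iu(\omega\cdot\kbf)t_n}$ yields the \emph{modulation system}, in which $z_j^{\kbf}$ is driven by cubic products $z_{j_1}^{\kbf_1}\overline{z_{j_2}^{\kbf_2}}z_{j_3}^{\kbf_3}$ with $j_1-j_2+j_3=j$ and $\kbf_1-\kbf_2+\kbf_3=\kbf$.

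The resonance structure decides how each equation is solved, and this is where \eqref{eq-cfl} enters. For a \emph{non-resonant} index ($j^2\ne\omega\cdot\kbf$) the equation for $z_j^{\kbf}$ is algebraic and is solved by dividing by a factor $\e^{-\iu(j^2-\omega\cdot\kbf)h}-1$; since $\abs{j^2}\le K^2$ and $\abs{\omega\cdot\kbf}\le K^2\sum_a\abs{k_a}$, the condition $(N+1)hK^2\le c_0<2\pi$ guarantees $\abs{(j^2-\omega\cdot\kbf)h}<2\pi$ for all indices with $\sum_a\abs{k_a}\le N$, so these denominators stay bounded away from zero and all bounds are uniform in $h$ and $K$. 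For a \emph{resonant} index ($j^2=\omega\cdot\kbf$, which in the completely resonant regime occurs abundantly and in particular for the dominant index $(j)$) the factor vanishes and the equation becomes a slow differential equation $\tfrac{\drm}{\drm\tau}z_j^{\kbf}=\dots$. Solving this hierarchy from low to high $\sum_a\abs{k_a}$, using the $H^1(\mathbb{T})$-algebra property and the convolution bookkeeping forced by the cubic coupling, I would establish geometric-in-$\abs{\kbf}$ bounds on a weighted norm of the $z_j^{\kbf}$ and their $\tau$-derivatives, with $z_j^{\kbf}=O(\ep^{\sum_a\abs{k_a}})$ and leading part of size $\ep$. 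Truncating after the indices with $\sum_a\abs{k_a}\le N$ produces an expansion satisfying \eqref{eq-method} up to a defect small enough that, by a short Gronwall estimate for the scheme, it approximates $\psi^n$ on a slow-time interval of length $O(1)$.

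Next I would exploit the gauge invariance of the modulation system: the balance $\kbf_1-\kbf_2+\kbf_3=\kbf$ means the system is invariant under $z_j^{\kbf}\mapsto\e^{\iu(\theta\cdot\kbf)}z_j^{\kbf}$ for every $\theta=(\theta_a)\in\mathbb{R}^{\disc}$. Applying this to the one-parameter group generated by $\theta_a=\skla{a}^2$ and using the formal Hamiltonian structure of the modulation system produces an almost-invariant $\mathcal{I}[\zbf(\tau)]$ whose leading term is $\sum_{j}\skla{j}^2\,\abs{z_j^{(j)}}^2$, which by the expansion equals $\norm{\psi^n}_1^2$ up to higher order in $\ep$. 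Since $\mathcal{I}$ is exactly conserved along the modulation system and drifts only by the small defect along the numerical solution, this gives near-conservation of $\norm{\psi^n}_1^2$ on each short interval. Restarting the expansion at the end of each interval and matching the values of $\mathcal{I}$ — possible because its leading term is a genuine function of the numerical state, so the value carried over changes only by higher-order terms — and summing over the whole interval $0\le t_n\le\ep^{-N+1}$ accumulates the asserted bound $\ep^{-2}\absbig{\norm{\psi^n}_1^2-\norm{\psi^0}_1^2}\le C\ep$.

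The main obstacle I expect is the uniform construction of the completely resonant expansion for a \emph{general high-order} splitting \eqref{eq-method}. Unlike the Lie or Strang case, the composition of the $2s$ flows must be expanded and its frequencies matched while every constant is kept independent of $K$ and $h$; the abundance of exact resonances $j^2=\omega\cdot\kbf$ in the completely resonant setting, the control of the non-resonant denominators solely through the CFL condition \eqref{eq-cfl}, and the estimation of the convolution sums from the cubic nonlinearity in the $H^1$-norm together form the technical heart of the proof. By comparison, the Noether-type derivation of the almost-invariant and the patching over long times are essentially routine once the expansion and its uniform bounds are in place.
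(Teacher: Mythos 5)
Your overall strategy matches the paper's: a completely resonant modulated Fourier expansion whose non-resonant coefficients are obtained by dividing by denominators $\e^{-\iu(k-j^2)h}-1$ controlled through the CFL condition \eqref{eq-cfl}, whose resonant coefficients solve slow differential equations, an almost-invariant with leading part $\sum_j\skla{j}^2\abs{z_j^{j^2}}^2$, and a patching of intervals. Two differences in the middle part are worth noting. First, in the completely resonant setting your multi-index bookkeeping is underdetermined: comparing coefficients of $\e^{-\iu(\omega\cdot\kbf)t}$ can only determine sums over all $\kbf$ with the same integer frequency, which is why the paper works from the start with a single integer index $k\in\Z$ (and correspondingly only retains the invariant that survives the collapse, $\Ec=\sum_{j,k}(k+1)\abs{z_j^k}^2$). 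Second, the paper does not use a Noether-type argument: it derives the almost-invariant by showing that the modulation system itself is a splitting method whose two factors $\Phbf_\Abf^{\nt}$ and $\Phbf_\Bbf^{\nt}$ are exact flows of differential equations that both conserve $\Ec$ exactly (Lemma~\ref{lemma-flows}); to make your gauge-invariance argument rigorous for the composition of $2s$ flows of a high-order splitting you would need precisely this per-flow conservation, so the two routes converge, but the paper's formulation is what makes the multi-stage case tractable.

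The genuine gap is in the step you dismiss as ``essentially routine'': the patching over $0\le t_n\le\ep^{-N+1}$. At each interface you restart the expansion from the numerical state and argue that the carried-over value of the invariant ``changes only by higher-order terms'' because its leading term is a genuine function of the numerical state. But the invariant is a function of the modulation functions, not of $\psi^{\np}$, and what closeness-to-the-state gives you (cf.\ Lemma~\ref{lemma-Ec-norm}) is agreement of the old and new invariants only up to a fixed power, $O(\ep^3)$ (at best $O(\ep^4)$ with the sharper constants from its proof), independent of $N$. The number of interfaces is of order $\ep^{-N+2}$, so these jumps accumulate to $O(\ep^{5-N})$, which destroys the claimed bound $C\ep^3$ for every $N\ge 3$; your argument as written proves the theorem only for $N=2$. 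What is actually needed is an interface discrepancy of size $O(\ep^{N+1})$, and this cannot be routed through the numerical state: one must compare the two consecutive families of modulation functions directly and show $\normv{\zbf-\hat\zbf}_1\lesssim\ep^N$ at the interface. That comparison is a substantial piece of the paper's proof (Section~\ref{sec-long}): it requires Lipschitz-type estimates for the nonlinear operators of the modulation system (Lemma~\ref{lemma-F-lipschitz}) and a second induction through the entire construction of Algorithm~\ref{algo-comp}, now for differences of modulation coefficient functions (Lemma~\ref{lemma-interface-z}), before one can conclude $\absbig{\Ec(\ep^{-1})-\hat\Ec(\ep^{-1})}\lesssim\ep^{N+1}$ (Lemma~\ref{lemma-interface-E}).
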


As shown in \cite[Corollary~3.2]{Gauckler2017a}, the long-time near-conservation of the squared $H^1(\mathbb{T})$-norm of Theorem~\ref{thm-main} implies in particular 
long-time near-conservation of energy
\[
\ep^{-2} \absbig{E(\psi^n) - E(\psi^0) } \le C \ep \myfor 0\le t_n=n h \le \ep^{-N+1}
\]
along the numerical solution~\eqref{eq-method} and its long-time regularity
\[
\norm{\psi^n}_1 \le 2\ep \myfor 0\le t_n=n h \le \ep^{-N+1}.
\]

The main assumption on the numerical method in Theorem~\ref{thm-main} is the CFL-type step-size restriction~\eqref{eq-cfl}. With additional technical effort, this condition could be improved to $(N+1) h K^2 \le c_0<4\pi$ and even weakened to a numerical non-resonance condition as in \cite[Theorem~3.1]{Gauckler2017a}.

An important assumption on the problem in Theorem~\ref{thm-main} is the smallness~\eqref{eq-init} of the initial values, which cannot be removed at present. In contrast, there is some flexibility in choosing the nonlinearity in the nonlinear Schr\"odinger equation~\eqref{eq-nls}. For example, the cubic nonlinearity could be replaced (with minor modifications) by $\kappa \abs{\psi}^{2p}\psi$ with $\kappa\in\mathbb{R}$ and $p\in\mathbb{N}$.

Theorem~\ref{thm-main} shows that high-order splitting integrators have (at least) the same long-time behaviour as has been proven for the (low-order) Lie--Trotter and Strang splitting integrators in~\cite{Gauckler2017a}. One open question is whether high-order methods behave better than low-order methods. Motivated by the situation for Hamiltonian ordinary differential equations and based on several numerical experiments (with an implementation in quadruple precision), we conjecture that the error in the energy should scale like~$h^p$ for a method of order~$p$ (in addition to being small in terms of~$\ep$). Our proof given here, however, does yield such a statement because it is based on expansions in terms of~$\ep$ in an essential way.

\section{Construction of a modulated Fourier expansion}\label{sec-mfe}

To begin with the proof of Theorem~\ref{thm-main}, we construct in this section a multiscale expansion of the numerical solution~\eqref{eq-method}, a modulated Fourier expansion in time. Such an expansion is one of the tools that can be used to prove long-time properties of numerical methods for Hamiltonian partial differential equations.

Throughout, we work under the assumptions of Theorem~\ref{thm-main}: We let~$N\ge 2$, we assume that the CFL-type step-size restriction~\eqref{eq-cfl} holds with this~$N$ and a constant $c_0< 2\pi$, and we consider a small initial value~\eqref{eq-init} of size~$\ep$. 

\subsection{The numerical method in Fourier space}

Throughout this section, we will consider the numerical method~\eqref{eq-method} in Fourier space. To do so we write the flows~$\phi_A^{\nt}$ and~$\phi_B^{\nt}$ given by~\eqref{eq-flows} in Fourier space. Denoting by an index~$j$ the $j$th Fourier coefficient, we have
\begin{equation}\label{eq-flowA-fourier}
\klabig{ \phi_A^{\nt}(\eta) }_j = \e^{-\iu j^2 \nt} \eta_j, 
\end{equation}
and 
\begin{equation}\label{eq-flowB-fourier}
\klabig{ \phi_B^{\nt}(\eta) }_j = \sum_{m=0}^{\infty} \frac{(-\iu \nt)^m}{m!} \!\!\!\! \sum_{\substack{j_1+\dots+j_{m+1}\\-j_{m+2}-\dots-j_{2m+1} \equiv j}} \!\!\!\! \eta_{j_1} \dotsm \eta_{j_{m+1}} \overline{\eta_{j_{m+2}} \dotsm \eta_{j_{2m+1}} }.
\end{equation}
Here and in the following, we denote by~$\equiv$ the congruence modulo~$2K$.

\subsection{Functional analytic framework}

\begin{mydefinition}\label{definition-zbf}
  In the following, we will consider sequences from the space $\mathbb{C}^{\disc\times\mathbb{Z}}$ indexed by $j\in\disc=\{-K,-K+1,\dots,K-1\}$ and $k\in\Z$. They are denoted by boldface letters:
\[
  \zbf=\klabig{z_j^k}_{j\in\disc,k\in\mathbb{Z}} \in \mathbb{C}^{\disc\times\mathbb{Z}}.
\]
For such a sequence, we introduce, for $0\le\sigma\le 1$, the norms 
\begin{equation}\label{eq-normv}
\normv{\zbf}_{\sigma}^2 = \sum_{j\in\disc} \skla{j}^{2\sigma} \klabigg{ \sum_{k\in\Z} \absbig{z_{j}^k} }^2
\end{equation}
with the weights~$\skla{j}=\sqrt{\abs{j}^2+1}$ of~\eqref{eq-skla}.
We also introduce the notations
\begin{equation}\label{eq-zbar}
  \overline{\zbf}=\klabig{\overline{z_{-j}^{-k}}}_{j\in\disc,k\in\mathbb{Z}} \in \mathbb{C}^{\disc\times\mathbb{Z}} \myand \abs{\zbf} = \klabig{\absbig{z_{j}^{k}}}_{j\in\disc,k\in\mathbb{Z}} \in \mathbb{C}^{\disc\times\mathbb{Z}}.
\end{equation}
In the definition of~$\overline{\zbf}$, we identify the index $-j=+K\notin\disc$ with the index $-K\in\disc$.
\end{mydefinition}

The norm~$\normv{\cdot}_\sigma$ of~\eqref{eq-normv} is the norm considered also in \cite[Section~5.4]{Gauckler2017a}. We recall from there that
\begin{equation}\label{eq-l1l2}
\sum_{j\in\disc} \skla{j}^{2\sigma} \sum_{k\in\Z} \absbig{z_{j}^k}^2 \le \normv{\zbf}_{\sigma}^2
\end{equation}
and that, for $\sigma>\frac12$, 
\begin{equation}\label{eq-algebra}
\normv{\zbf\ast\vbf}_{\sigma} \le C_\sigma \normv{\zbf}_\sigma \normv{\vbf}_\sigma,
\end{equation}
where~$\ast$ is the discrete convolution
\begin{equation}\label{eq-conv}
\kla{\zbf \ast \vbf }_j^k = \sum_{j_1+j_2\equiv j} \;\; \sum_{k_1+k_2=k} z_{j_1}^{k_1} v_{j_2}^{k_2}. 
\end{equation}
Recall that~$\equiv$ denotes the congruence modulo~$2K$.

\begin{mydefinition}\label{definition-zhat}
In addition, we will consider sequences from the space $\mathbb{C}^{\disc\times\mathbb{Z}\times\mathbb{N}}$ indexed by $j\in\disc$, $k\in\Z$ and $p\in\mathbb{N}$. These are denoted by blackboard letters:
\[
\zhat=\klabig{z_{j,p}^k}_{j\in\disc,k\in\mathbb{Z},p\in\mathbb{N}} \in \mathbb{C}^{\disc\times\mathbb{Z}\times\mathbb{N}}.
\]
By~$\idhat$, we denote the identity operator on this space~$\mathbb{C}^{\disc\times\mathbb{Z}\times\mathbb{N}}$.
For $p\in\N$ and a sequence $\zhat\in\mathbb{C}^{\disc\times\mathbb{Z}\times\mathbb{N}}$, we denote by~$\zind_p$ the sequence
\begin{equation}\label{eq-zhatp}
\zind_p = \klabig{z_{j,p}^k}_{j\in\disc,k\in\mathbb{Z}} \in \mathbb{C}^{\disc\times\mathbb{Z}}
\end{equation}
from the smaller space $\mathbb{C}^{\disc\times\mathbb{Z}}$. We finally introduce, for $0\le\sigma\le 1$ and $p\in\N$, the seminorms
\begin{equation}\label{eq-seminorm}
\abs{\zhat}_{\sigma,p}^2 = \normvbigg{\sum_{q=1}^p \abs{\zind_q} }_{\sigma}^2=\sum_{j\in\disc} \skla{j}^{2\sigma} \klabigg{ \sum_{k\in\Z} \sum_{q=1}^p \absbig{z_{j,q}^k} }^2,
\end{equation}
where the notations~\eqref{eq-zbar} and~\eqref{eq-zhatp} are used.
\end{mydefinition}

\subsection{Modulated Fourier expansion}

\begin{approximationansatz} We use a modulated Fourier expansion in time of the numerical solution~\eqref{eq-method}. This is an expansion of the form 
\begin{equation}\label{eq-mfe}
\widetilde{\psi}(x,t) = \sum_{j\in\disc} \sum_{k\in\Z} z_j^k\kla{\ep t} \e^{-\iu kt} \e^{\iu j x}
\end{equation}
such that
\begin{equation}\label{eq-mfe-approx}
\psi^n(x) \approx \widetilde{\psi}^n := \widetilde{\psi}(x,t_n) \myfor n=0,1,2,\ldots.
\end{equation}
The coefficients $z_j^k=z_j^k(\ep t)$ of the expansion are assumed to be polynomials of the slow time
\[
\tau = \ep t. 
\]
Recall that $\ep\ll 1$ is the small parameter of~\eqref{eq-init}.
The coefficients~$z_j^k$ are called \emph{modulation functions} and collected in the sequence $\zbf = \kla{z_j^k}_{j\in\disc,k\in\mathbb{Z}} \in \mathbb{C}^{\disc\times\mathbb{Z}}$ (see Definition~\ref{definition-zbf}).
\end{approximationansatz}

\begin{definition*} To derive equations for these modulation functions, we introduce operators~$\Phbf_\Abf^{\nt}$ and~$\Phbf_\Bbf^{\nt}$ on~$\mathbb{C}^{\disc\times\mathbb{Z}}$ such that, for $\vbf=(v_j^k)_{j\in\disc,k\in\Z}$ and $\eta(x) = \sum_{j\in\disc} \eta_j \e^{\iu j x}$ with $\eta_j=\sum_{k\in\Z} v_j^k \e^{-\iu k t}$, 
\begin{equation}\label{eq-prop-phiA}
\klabig{\phi_A^{\nt}(\eta)}_j = \sum_{k\in\Z} \klabig{ \Phbf_\Abf^{\nt}(\vbf) }_j^k \e^{-\iu kt} 
\end{equation}
and
\begin{equation}\label{eq-prop-phiB}
\klabig{\phi_B^{\nt}(\eta)}_j = \sum_{k\in\Z} \klabig{ \Phbf_\Bbf^{\nt}(\vbf) }_j^k \e^{-\iu kt} 
\end{equation}
with the flows~$\phi_A^{\nt}$ and~$\phi_B^{\nt}$ of~\eqref{eq-flowA-fourier} and~\eqref{eq-flowB-fourier}. 
The operator~$\Phbf_\Abf^{\nt}$ is given by
\begin{equation}\label{eq-Phbf-A}
\klabig{ \Phbf_\Abf^{\nt}(\vbf) }_j^k = \e^{-\iu j^2 \nt} v_j^k
\end{equation}
and the operator~$\Phbf_\Bbf^{\nt}$ by
\begin{equation}\label{eq-Phbf-B}
\Phbf_\Bbf^{\nt}(\vbf) = \sum_{m=0}^{\infty} \frac{(-\iu \nt)^m}{m!} \; \underbrace{\vbf \ast \dotsm \ast \vbf}_{\text{$m+1$ times}} \ast \underbrace{\overline{\vbf} \ast \dotsm \ast \overline{\vbf}}_{\text{$m$ times}},
\end{equation}
where~$\overline{\vbf}$ is defined in~\eqref{eq-zbar} and~$\ast$ in~\eqref{eq-conv}. The property~\eqref{eq-algebra} of the convolution~$\ast$ shows that $\Phbf_\Bbf^{\nt}(\vbf)$ has a finite norm~$\normv{\cdot}_1$ (see~\eqref{eq-normv}) if the argument~$\vbf$ does; in particular, the infinite sums in the definition of~$\Phbf_\Bbf^{\nt}$ converge in this case.
\end{definition*}

\begin{system} Using the properties~\eqref{eq-prop-phiA} and~\eqref{eq-prop-phiB}, we get
\begin{align*}
  &\klaBig{\phi^{a_1h}_{A} \circ \phi^{b_1h}_{B} \circ \phi^{a_2h}_{A} \circ \phi^{b_2h}_{B} \circ \dotsm \circ \phi^{a_sh}_{A} \circ \phi^{b_sh}_{B} (\eta)}_j \\
  & \qquad\qquad = \sum_{k\in\Z} \klaBig{ \Phbf^{a_1h}_{\Abf} \circ \Phbf^{b_1h}_{\Bbf} \circ \Phbf^{a_2h}_{\Abf} \circ \Phbf^{b_2h}_{\Bbf} \circ \dotsm \circ \Phbf^{a_sh}_{\Abf} \circ \Phbf^{b_sh}_{\Bbf} \kla{\vbf} }_j^k \e^{-\iu k t}.
\end{align*}
Inserting the ansatz~\eqref{eq-mfe} in the numerical method~\eqref{eq-method} and requiring~\eqref{eq-mfe-approx} thus yields
\[
\klabig{\widetilde\psi(\cdot,t+h)}_j = \sum_{k\in\Z} \klaBig{ \Phbf^{a_1h}_{\Abf} \circ \Phbf^{b_1h}_{\Bbf} \circ \Phbf^{a_2h}_{\Abf} \circ \Phbf^{b_2h}_{\Bbf} \circ \dotsm \circ \Phbf^{a_sh}_{\Abf} \circ \Phbf^{b_sh}_{\Bbf} \klabig{\zbf(\tau)} }_j^k \e^{-\iu k t}
\]
with $\tau=\ep t$.
By comparing the coefficients of $\e^{- \iu k t}$, we finally get the system
\begin{equation}\label{eq-modsystem}
\e^{-\iu k h} z_j^k(\tau+\ep h) = \klaBig{ \Phbf^{a_1h}_{\Abf} \circ \Phbf^{b_1h}_{\Bbf} \circ \Phbf^{a_2h}_{\Abf} \circ \Phbf^{b_2h}_{\Bbf} \circ \dotsm \circ \Phbf^{a_sh}_{\Abf} \circ \Phbf^{b_sh}_{\Bbf} \klabig{\zbf(\tau)} }_j^k
\end{equation}
that the modulation functions~$z_j^k$ of~\eqref{eq-mfe} and~\eqref{eq-mfe-approx} should satisfy, at least up to a small defect.
 For later use, we expand the polynomial on the left-hand side of~\eqref{eq-modsystem} as a Taylor series and thus get
\begin{equation}\label{eq-modsystem-temp}
\e^{-\iu k h} \sum_{\ell=0}^\infty \frac{\ep^\ell h^\ell}{\ell!} \,  \frac{\drm^\ell}{\drm\tau^\ell} \, z_j^k(\tau) = \klaBig{ \Phbf^{a_1h}_{\Abf} \circ \Phbf^{b_1h}_{\Bbf} \circ \dotsm \circ \Phbf^{a_sh}_{\Abf} \circ \Phbf^{b_sh}_{\Bbf} \klabig{\zbf(\tau)} }_j^k.
\end{equation}
For the initial value, we get the condition
\begin{equation}\label{eq-modsystem-init}
\sum_{k\in\mathbb{Z}} z_j^k(0) = \psi_j^0. 
\end{equation}
\end{system}

\begin{remark*}
  The modulated Fourier expansion~\eqref{eq-mfe} used here is in principle the same as in~\cite[Section~5.1]{Gauckler2017a}. As there and previously in~\cite{Gauckler2016}, it is a \emph{completely resonant modulated Fourier expansion} where the integer exponents~$k$ of $\e^{-\iu k t}$ are integer linear combinations of the completely resonant frequencies~$j^2$ of the nonlinear Schr\"odinger equation~\eqref{eq-nls} and the numerical method~\eqref{eq-method}. The system~\eqref{eq-modsystem} that the modulation functions have to satisfy, however, is more complicated than in~\cite[Section~5.1]{Gauckler2017a} involving several nonlinear operators instead of just one.
\end{remark*}

\subsection{Asymptotic expansion of the modulation functions}

\begin{approximationansatz2} To construct the modulation functions~$z_j^k$ of the modulated Fourier expansion~\eqref{eq-mfe}, we introduce yet another expansion. This time, it is a truncated expansion of~$z_j^k$ in terms of powers of the small parameter~$\ep$ of~\eqref{eq-init}:
\begin{equation}\label{eq-mfe-exp}
z_j^k(\tau) = \sum_{p=1}^N \ep^p z_{j,p}^k(\tau). 
\end{equation}
The truncation parameter~$N$ is the fixed but arbitrary number of Theorem~\ref{thm-main} and the step-size restriction~\eqref{eq-cfl}, and we set
\[
  z_{j,p}^k=0 \myfor p>N.
\]
The coefficients~$z_{j,p}^k$ are called \emph{modulation coefficient functions} and collected in the sequence $\zhat=\kla{z_{j,p}^k}_{j\in\disc,k\in\mathbb{Z},p\in\mathbb{N}} \in \mathbb{C}^{\disc\times\mathbb{Z}\times\mathbb{N}}$ (see Definition~\ref{definition-zhat}).
\end{approximationansatz2}

\begin{definition*}
To derive equations for these coefficients, we introduce operators $\Phihat_\Ahat^{\nt}$ and $\Phihat_\Bhat^{\nt}$ as
\[
\klabig{ \Phihat_\Ahat^{\nt}(\vhat) }_{j,p}^k = \e^{-\iu j^2 \nt} v_{j,p}^k
\]
and
\[
\klabig{ \Phihat_\Bhat^{\nt}(\vhat) }_{j,p}^k = \klabigg{ \sum_{m=0}^{\infty} \frac{(-\iu \nt)^m}{m!} \!\!\!\! \sum_{p_1+\dots+p_{2m+1} = p} \!\!\!\! \vind_{p_1} \ast \dotsm \ast \vind_{p_{m+1}} \ast \overline{\vind_{p_{m+2}}} \ast \dotsm \ast \overline{\vind_{p_{2m+1}}}}_j^k,
\]
where the notations~\eqref{eq-zbar} and~\eqref{eq-zhatp} are used.
\end{definition*}

\begin{system2}
With these operators, the system that the modulation coefficient functions~$z_{j,p}^k$ of~\eqref{eq-mfe-exp} should satisfy is
\begin{equation}\label{eq-modsystem-exp}\begin{split}
&\e^{-\iu k h} \sum_{\ell=0}^{p-1} \frac{h^{\ell}}{\ell!} \, \frac{\drm^\ell}{\drm\tau^\ell} \, z_{j,p-\ell}^k(\tau) = \klaBig{ \Phihat^{a_1h}_{\Ahat} \circ \Phihat^{b_1h}_{\Bhat} \circ \dotsm \circ \Phihat^{a_sh}_{\Ahat} \circ \Phihat^{b_sh}_{\Bhat} \klabig{\zhat(\tau)} }_{j,p}^k.
\end{split}\end{equation}
This is obtained by inserting the ansatz~\eqref{eq-mfe-exp} in the system~\eqref{eq-modsystem-temp} and comparing the coefficients of~$\ep^p$. Similarly, we get from~\eqref{eq-modsystem-init}
\begin{equation}\label{eq-modsystem-exp-init}
\sum_{k\in\mathbb{Z}} z_{j,p}^k(0) = \begin{cases} \ep^{-1} \psi_j^0, & p=1,\\ 0, & p\ge 2. \end{cases} 
\end{equation}
by~\eqref{eq-init}.
\end{system2}

\subsection{Computation of the modulation coefficient functions}\label{subsec-constr}

In order to (theoretically) compute a solution of the system~\eqref{eq-modsystem-exp} for the coefficients~$z_{j,p}^k$ of~\eqref{eq-mfe-exp}, we rewrite its right-hand side in the following lemma. Recall from Definition~\ref{definition-zbf} that we denote by~$\idhat$ the identity operator on $\C^{\disc\times\Z\times\N}$.

\begin{mylemma}\label{lemma-F}
We have
\[
\Phihat^{a_1h}_{\Ahat} \circ \Phihat^{b_1h}_{\Bhat} \circ \dotsm \circ \Phihat^{a_sh}_{\Ahat} \circ \Phihat^{b_sh}_{\Bhat} \kla{\vhat} = \Phihat^{h}_{\Ahat} + \Fhat
\]
with 
\begin{equation}\label{eq-F}
\Fhat = \sum_{r=1}^s \Phihat^{(a_1+\dots+a_r)h}_{\Ahat} \circ \klabig{\Phihat^{b_r h}_{\Bhat}-\idhat} \circ \Phihat^{a_{r+1}h}_{\Ahat} \circ \Phihat^{b_{r+1}h}_{\Bhat} \circ \dotsm \circ \Phihat^{a_sh}_{\Ahat} \circ \Phihat^{b_sh}_{\Bhat}.
\end{equation}
The operator~$\Fhat$ has the properties that
\begin{enumerate}
\item[(a)] if $v_{j',p'}^{k'}=0$ whenever $\abs{k'}>p' K^2$, then $(\Fhat(\vhat))_{j,p}^k=0$ whenever $\abs{k}>p K^2$,
\item[(b)] $(\Fhat(\vhat))_{j,p}^k$ depends only on $v_{j',p'}^{k'}$ with~$p'\le p-2$,
\item[(c)] if $v_{j',p'}^{k'}$ are polynomials of degree $\deg(v_{j',p'}^{k'})\le p'-1$, 
  then $(\Fhat(\vhat))_{j,p}^k$ is a polynomial of degree $\deg((\Fhat(\vhat))_{j,p}^k)\le p-3$.
\end{enumerate}
\end{mylemma}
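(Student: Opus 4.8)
The plan is to prove the displayed decomposition first and then read off the three properties from the elementary action of the two building blocks $\Phihat_\Ahat^{\nt}$ and $\Phihat_\Bhat^{\nt}$ on each factor. For the decomposition I would argue by telescoping. The operator $\Phihat_\Ahat^{\nt}$ obeys the one-parameter group law $\Phihat_\Ahat^{\nt_1}\circ\Phihat_\Ahat^{\nt_2}=\Phihat_\Ahat^{\nt_1+\nt_2}$, which is immediate from the phase $\e^{-\iu j^2\nt}$. In the full composition I replace each factor $\Phihat_\Bhat^{b_rh}$ by $(\Phihat_\Bhat^{b_rh}-\idhat)+\idhat$, starting at $r=1$. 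The $(\Phihat_\Bhat^{b_rh}-\idhat)$-part produces precisely the $r$-th summand of $\Fhat$ in~\eqref{eq-F}, whereas the $\idhat$-part lets the two adjacent $\Ahat$-factors merge through the group law into $\Phihat_\Ahat^{(a_1+\dots+a_{r+1})h}$; iterating this for $r=1,\dots,s$ peels off all summands of $\Fhat$ and leaves the single term $\Phihat_\Ahat^{(a_1+\dots+a_s)h}$, which equals $\Phihat_\Ahat^{h}$ by the consistency $a_1+\dots+a_s=1$ of~\eqref{eq-consistency}. Summation then gives the asserted identity.

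For (a)--(c) the common mechanism is that subtracting $\idhat$ removes the $m=0$ term from the series defining $\Phihat_\Bhat^{\nt}$, so that $\Phihat_\Bhat^{\nt}-\idhat$ is a sum over $m\ge 1$ of convolution products of $2m+1\ge 3$ factors $\vind_{p_i}$ or $\overline{\vind_{p_i}}$ with $p_1+\dots+p_{2m+1}=p$ and every $p_i\ge 1$ (the order index running over $p\ge 1$, matching the ansatz~\eqref{eq-mfe-exp}). For each of the three relevant gradings I would exhibit an invariant that is preserved by both $\Phihat_\Ahat^{\nt}$ and the full $\Phihat_\Bhat^{\nt}$ and strictly improved by $\Phihat_\Bhat^{\nt}-\idhat$. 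For (a): $\Phihat_\Ahat^{\nt}$ is diagonal and does not change $k$, while in a convolution the $k$-indices add (conjugation only flips their sign), so $\abs{k}\le\abs{k_1}+\dots+\abs{k_{2m+1}}\le(p_1+\dots+p_{2m+1})K^2=pK^2$; both operators thus preserve the support condition $\{v_{j,p}^k=0 \text{ for } \abs{k}>pK^2\}$. For (b): the order $p$ is carried unchanged by $\Phihat_\Ahat^{\nt}$, and the full $\Phihat_\Bhat^{\nt}$ builds its level-$p$ output only from input levels $\le p$, but for $\Phihat_\Bhat^{\nt}-\idhat$ the $\ge 3$ factors with $p_i\ge 1$ force each $p_i\le p-2$. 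For (c): under the (pointwise in $\tau$) convolution the degrees add while the constant phase $\Phihat_\Ahat^{\nt}$ leaves them fixed, so a product of $2m+1$ factors of degrees $\le p_i-1$ has degree $\le\sum(p_i-1)=p-(2m+1)$, which is $\le p-1$ for the full $\Phihat_\Bhat^{\nt}$ and $\le p-3$ for $\Phihat_\Bhat^{\nt}-\idhat$.

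It then remains to propagate these one-step statements through the composition in~\eqref{eq-F}. Reading each summand as $\Phihat_\Ahat^{(a_1+\dots+a_r)h}\circ(\Phihat_\Bhat^{b_rh}-\idhat)\circ(\text{tail})$, the tail --- built only from $\Phihat_\Ahat^{\nt}$ and full $\Phihat_\Bhat^{\nt}$ --- preserves the invariant, and the leftmost $\Ahat$-factor does too, so the single strict gain supplied by $\Phihat_\Bhat^{b_rh}-\idhat$ survives to the output. I expect this bookkeeping, rather than any estimate, to be the only delicate point: one must verify that the full $\Phihat_\Bhat^{\nt}$ standing in the tail does not spoil the gain (for (b), that it keeps the level at $\le p$; for (c), the degree at $\le p-1$), and that the convention $p_i\ge 1$ is exactly what produces the sharp losses $-2$ in (b) and $-3$ in (c).
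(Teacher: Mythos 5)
Your proposal is correct and follows essentially the same route as the paper: the identity via a telescopic sum using the group law (and linearity) of $\Phihat_\Ahat^{\nt}$ together with the consistency condition~\eqref{eq-consistency}, and properties (a)--(c) via the observation that $\Phihat_\Bhat^{\nt}-\idhat$ consists only of terms with $m\ge 1$, i.e.\ convolution products of $2m+1\ge 3$ factors of orders $p_i\ge 1$ summing to $p$, propagated through the composition. Your explicit bookkeeping of which invariants are preserved by $\Phihat_\Ahat^{\nt}$ and the full $\Phihat_\Bhat^{\nt}$ and strictly improved by $\Phihat_\Bhat^{\nt}-\idhat$ is exactly what the paper's terser proof leaves implicit.
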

\begin{proof}
  The equation~\eqref{eq-F} follows from a telescopic sum, the linearity of $\Phihat_{\Ahat}^{\nt}$ and the consistency assumption~\eqref{eq-consistency}.
  
To prove the properties~(a)--(c) of~$\Fhat$, we observe that $(\Phihat^{\nt}_{\Bhat}(\vhat))_{j,p}^k$ is a sum of products of the form
\[
v_{j_1,p_1}^{k_1}\dotsm v_{j_{m+1},p_{m+1}}^{k_{m+1}} \overline{v_{-j_{m+2},p_{m+2}}^{-k_{m+2}}\dotsm v_{-j_{2m+1},p_{2m+1}}^{-k_{2m+1}}}
\]
with $m\ge 0$, $p_1+\dots+p_{2m+1}=p$ and $k_1+\dots+k_{2m+1}=k$. The same is true for $\Phihat^{\nt}_{\Bhat} - \idhat$, but with $m\ge 1$.

(a) This observation directly implies that~$\Fhat$ has the property~(a).

(b) It also shows that $(\Phihat^{\nt}_{\Bhat}(\vhat))_{j,p}^k$ depends only on $v_{j',p'}^{k'}$ with~$p'\le p$, and that $((\Phihat^{\nt}_{\Bhat}-\idhat)(\vhat))_{j,p}^k$ depends only on $v_{j',p'}^{k'}$ with~$p'\le p-2$. This yields property~(b) of~$\Fhat$.

(c) 
The property~(c) is obtained similarly since the above observation implies that $\deg((\Phihat^{\nt}_{\Bhat} (\vhat))_{j,p}^k) \le p-1$ and $\deg(((\Phihat^{\nt}_{\Bhat} - \idhat)(\vhat))_{j,p}^k) \le p-3$ if $\deg(v_{j',p'}^{k'})\le p'-1$ for all $j'$, $k'$ and $p'$.
\end{proof}

By Lemma~\ref{lemma-F}, the system~\eqref{eq-modsystem-exp} for the coefficients~$z_{j,p}^k$ of~\eqref{eq-mfe-exp} is of the form
\begin{equation}\label{eq-modsystem-exp-comput}
\klabig{\e^{-\iu k h} - \e^{-\iu j^2 h}} z_{j,p}^k(\tau) + \e^{-\iu k h} \sum_{\ell=1}^{p-1} \, \frac{h^{\ell}}{\ell!} \frac{\drm^\ell}{\drm\tau^\ell} \, z_{j,p-\ell}^k(\tau)  = \klabig{\Fhat\kla{\zhat(\tau)}}_{j,p}^k
\end{equation}
with $\Fhat$ from~\eqref{eq-F}. By part~(b) of Lemma~\ref{lemma-F}, the right-hand side $(\Fhat(\zhat))_{j,p}^k$ of~\eqref{eq-modsystem-exp-comput} depends only on~$z_{j',p'}^{k'}$ with~$p'\le p-2$. This fact allows us to use the following iterative algorithm to compute the coefficients $z_{j,p}^k$.

\begin{algorithm}\label{algo-comp}
  For $p=1,2,3,\dots,N$ (in this order) compute $z_{j,p}^k$ for all $j$ and $k$ in the following way: 
  \begin{enumerate}
  \item[(a)] If $k\ne j^2$ and $\abs{k} \le p K^2$: Under the CFL-type step-size restriction~\eqref{eq-cfl}, we have $\abs{(k-j^2)h} \le (p+1)K^2 h < 2\pi$, and hence $\e^{-\iu k h}\ne \e^{-\iu j^2 h}$. We can thus solve~\eqref{eq-modsystem-exp-comput} for $z_{j,p}^k$ in this case.
  \item[(b)] If $k\ne j^2$ and $\abs{k} > p K^2$: In this case we set $z_{j,p}^k=0$ without introducing any error, since the right-hand side of~\eqref{eq-modsystem-exp-comput} and the derivatives on the left-hand side vanish by Lemma~\ref{lemma-F}~(a).
  \item[(c)] If $k=j^2$ and $p\le N-1$: In this case, the factor in front of~$z_{j,p}^{k}$ in~\eqref{eq-modsystem-exp-comput} vanishes. We therefore use~\eqref{eq-modsystem-exp-comput} with~$p+1$ instead of~$p$ to compute the derivative~$\frac{\drm}{\drm\tau} \, z_{j,p}^{k}$. To compute the initial value $z_{j,p}^{k}(0)$, we solve~\eqref{eq-modsystem-exp-init} (where~$z_{j,p}^{k'}(0)$ for $k'\ne k=j^2$ has already been computed in steps~(a) and~(b)). 
    \item[(d)] If $k=j^2$ and $p=N$: In this case, $\frac{\drm}{\drm\tau} \, z_{j,p}^{k}$ does not appear in any of the equations~\eqref{eq-modsystem-exp-comput} with $p\le N$. We therefore set it to zero and compute the initial value $z_{j,p}^{k}(0)$ from~\eqref{eq-modsystem-exp-init}. 
  \end{enumerate}
\end{algorithm}

During the above computation, all formally infinite sums in~\eqref{eq-modsystem-exp-comput} are actually finite sums. By part~(c) of Lemma~\ref{lemma-F}, the constructed modulation coefficient functions~$z_{j,p}^k$ are polynomials of degree
\begin{equation}\label{eq-degree}
  \deg\klabig{z_{j,p}^{k}} \le p-1 \le N-1.
\end{equation}
In addition, we have
\begin{equation}\label{eq-offdiag-zero}
  z_{j,1}^k = z_{j,2}^k = 0 \myfor k\ne j^2
\end{equation}
by part~(b) of Lemma~\ref{lemma-F} and steps~(a) and~(b) of Algorithm~\ref{algo-comp}. 

We finally note that the above construction solves the systems~\eqref{eq-modsystem-exp} and~\eqref{eq-modsystem-exp-init} exactly for $p=1,\dots,N$. However, as the expansion~\eqref{eq-mfe-exp} is a truncated expansion, this does not give an exact solution for the system~\eqref{eq-modsystem}, and hence the approximation~\eqref{eq-mfe-approx} of the numerical solution by the modulated Fourier expansion is not exact. The defect in the system~\eqref{eq-modsystem} and the error in the approximation~\eqref{eq-mfe-approx} will be studied in the following Section~\ref{sec-estimates}.

\section{Estimates of the modulated Fourier expansion}\label{sec-estimates}

In this section, we derive estimates of the modulated Fourier expansion as constructed in the previous section. As there, we work under the assumptions of Theorem~\ref{thm-main}. By~$\lesssim$, we denote an inequality up to a constant that is independent of the small parameter~$\ep$ of~\eqref{eq-init}, the time step-size~$h$ and the spatial discretization parameter~$K$, but may depend on~$N$ from Theorem~\ref{thm-main}, $c_0$ from the step-size restriction~\eqref{eq-cfl}, the number of stages~$s$ of the method~\eqref{eq-method} and an upper bound on the absolute values of the coefficients $b_1,\dots,b_s$ of the method.

\subsection{Bounds of the modulation coefficient functions}

We derive bounds of the modulation coefficient functions $z_{j,p}^k$ as constructed in Algorithm~\ref{algo-comp}. We begin with the following lower bound on the coefficient $\e^{-\iu k h} - \e^{-\iu j^2 h}$ in the system~\eqref{eq-modsystem-exp-comput}, which holds thanks to the step-size restriction~\eqref{eq-cfl}. 

\begin{mylemma}\label{lemma-cfl}
Let $j\in\disc$ and $k\in\mathbb{Z}$ with $k\ne j^2$ and $\abs{k} \le N K^2$. Then, we have
  \[
\absbig{ \e^{-\iu k h} - \e^{-\iu j^2 h} }^{-1} \lesssim \abs{k-j^2}^{-1} h^{-1}.
\]
\end{mylemma}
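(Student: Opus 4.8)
The plan is to estimate the modulus $\absbig{\e^{-\iu k h} - \e^{-\iu j^2 h}}$ from below by reducing it to a single exponential and then applying the standard sine estimate on a controlled argument. First I would factor out a common phase, writing
\[
\absbig{\e^{-\iu k h} - \e^{-\iu j^2 h}} = \absbig{\e^{-\iu (k-j^2) h} - 1} = 2 \absbig{\sin\klabig{(k-j^2)h/2}},
\]
so that the problem becomes a lower bound on $\absbig{\sin((k-j^2)h/2)}$. The key point is that, under the hypothesis $\abs{k}\le N K^2$ together with $\abs{j^2}\le K^2$, the integer $k-j^2$ satisfies $\abs{k-j^2}\le (N+1)K^2$, so the CFL-type restriction~\eqref{eq-cfl} gives
\[
\absbig{(k-j^2)h/2} \le \tfrac12 (N+1) h K^2 \le c_0/2 < \pi.
\]
Thus the argument of the sine stays strictly inside $(-\pi,\pi)$ (and away from the endpoints by a fixed margin depending on $c_0$), which is exactly the regime where $\abs{\sin\theta}$ is comparable to $\abs{\theta}$.

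The main step is the elementary concavity estimate $\abs{\sin\theta}\ge \tfrac{2}{\pi}\abs{\theta}$ for $\abs{\theta}\le \pi/2$, or more uniformly $\abs{\sin\theta}\ge (\sin(c_0/2)/(c_0/2))\,\abs{\theta}$ for $\abs{\theta}\le c_0/2<\pi$. Applying this with $\theta=(k-j^2)h/2$ yields
\[
\absbig{\e^{-\iu k h} - \e^{-\iu j^2 h}} = 2\absbig{\sin\klabig{(k-j^2)h/2}} \gtrsim \abs{k-j^2}\,h,
\]
where the implicit constant depends only on $c_0$ (and thus, through $c_0$, on $N$ as allowed by the statement). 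Taking reciprocals gives the claimed bound $\absbig{\e^{-\iu k h} - \e^{-\iu j^2 h}}^{-1}\lesssim \abs{k-j^2}^{-1} h^{-1}$.

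I do not expect any serious obstacle here; the lemma is essentially a quantitative restatement of the numerical non-resonance already observed in step~(a) of Algorithm~\ref{algo-comp}, where the same CFL bound guarantees $\e^{-\iu k h}\ne \e^{-\iu j^2 h}$. The only point requiring minor care is verifying that $\abs{k-j^2}$ is bounded by $(N+1)K^2$ rather than a larger multiple of $K^2$, since the restriction~\eqref{eq-cfl} is stated precisely with the factor $N+1$; combining $\abs{k}\le N K^2$ with $\abs{j}\le K$ (so $j^2\le K^2$) gives exactly this bound and makes the argument of the sine land below $c_0/2<\pi$, so that the sine does not vanish and the comparability of $\abs{\sin\theta}$ with $\abs{\theta}$ holds with a constant depending only on $c_0$.
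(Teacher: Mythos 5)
Your proposal is correct and follows essentially the same route as the paper: factor out the common phase to reduce to $2\abs{\sin((k-j^2)h/2)}$, use $\abs{k-j^2}\le (N+1)K^2$ together with the CFL condition~\eqref{eq-cfl} to confine the argument to $[0,c_0/2]$ with $c_0/2<\pi$, and conclude via the lower bound $\abs{\sin\theta}\ge \bigl(\sin(c_0/2)/(c_0/2)\bigr)\abs{\theta}$, which is exactly the paper's monotone-$\sinc$ argument. Your remark that the cruder bound $\abs{\sin\theta}\ge\tfrac{2}{\pi}\abs{\theta}$ would not suffice (since $c_0/2$ may exceed $\pi/2$) is well taken, and the uniform version you invoke handles this correctly.
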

\begin{proof}
Let $\gamma = (j^2-k)h$ and note that $\abs{\gamma} \le  (1+N)K^2h \le c_0 < 2\pi$ by~\eqref{eq-cfl}. We have
  \[
\absbig{ \e^{-\iu k h} - \e^{-\iu j^2 h} } = \absbig{\e^{\iu \gamma} - 1} = 2 \, \absbig{\sin\klabig{\tfrac12\gamma}} \ge \abs{\gamma}\, \absbigg{\frac{\sin(\tfrac12 c_0)}{\tfrac12 c_0}},
\]
where we use in the final estimate that $\sinc(x) = \sin(x)/x$ is a monotonically decreasing and positive function on $[0,\frac12 c_0]$.
\end{proof}

We next derive bounds on the nonlinearity~$\Fhat$ in~\eqref{eq-modsystem-exp-comput}. We do this in the seminorms~$\abs{\cdot}_{1,p}$ defined in~\eqref{eq-seminorm}.

\begin{mylemma}\label{lemma-F-bounds}
  Fix $p\in\N$ and $L\in\N_0$, and let $\vhat=\vhat(\tau)\in\mathbb{C}^{\disc\times\Z\times\mathbb{N}}$ depend on time~$\tau$ with
  \[
    \absBig{\sfrac{\drm^\ell}{\drm\tau^\ell}\vhat(\tau)}_{1,p-2}\le M \myfor \ell=0,1,\dots,L.
  \]
  (For $p\le 2$, this is an empty condition.)
  Then, we have
  \[
\absBig{\sfrac{\drm^\ell}{\drm\tau^\ell}\Fhat\klabig{\vhat(\tau)}}_{1,p} \le C h \myfor \ell=0,1,\dots,L
  \]  with~$C$ depending only on the bound~$M$, the number of derivatives~$L$, the number of stages~$s$ and an upper bound on the absolute values of the coefficients $b_1,\dots,b_s$ of the method~\eqref{eq-method}.
\end{mylemma}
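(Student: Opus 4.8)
The plan is to reduce the estimate to the convolution-algebra inequality~\eqref{eq-algebra} and to extract the single factor of~$h$ from the unique occurrence of $\Phihat_\Bhat^{b_r h}-\idhat$ in each summand of~\eqref{eq-F}. First I would record the behaviour of the two building blocks. Since $\Phihat_\Ahat^{\nt}$ multiplies each entry $v_{j,p}^k$ by the $\tau$-independent phase $\e^{-\iu j^2\nt}$ of modulus one, it commutes with $\drm/\drm\tau$ and leaves every seminorm $\abs{\cdot}_{1,q}$ invariant, so all $\Phihat_\Ahat$-factors are harmless. For $\Phihat_\Bhat^{\nt}$ I would prove a polynomial seminorm bound by taking absolute values in its defining series, using that the reflection $(j,k)\mapsto(-j,-k)$ defining $\overline{\vhat}$ preserves $\normv{\cdot}_1$ (since $\skla{-j}=\skla{j}$), and bounding each convolution product after collapsing its factors into $V=\sum_{q'=1}^{q}\absbig{\vind_{q'}}$. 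The iterated algebra inequality~\eqref{eq-algebra} with $\sigma=1$ then gives
\[
\absbig{\Phihat_\Bhat^{\nt}(\vhat)}_{1,q}\le\sum_{m\ge0}\frac{\abs{\nt}^m}{m!}\,C_1^{2m}\,\abs{\vhat}_{1,q}^{2m+1},
\]
where the series in fact terminates at $m\le(q-1)/2$ because each of the $2m+1$ factors carries a $p$-index at least~$1$.

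The decisive point is the operator $\Phihat_\Bhat^{b_r h}-\idhat$. Dropping the $m=0$ term makes the sum start at $m=1$, so every surviving product has at least three factors whose $p$-indices sum to at most $q$; each such index is therefore at most $q-2$, which lets me replace $\abs{\vhat}_{1,q}$ by $\abs{\vhat}_{1,q-2}$ and pull out one power of $\nt=b_r h$:
\[
\absbig{(\Phihat_\Bhat^{b_r h}-\idhat)(\vhat)}_{1,q}\le\abs{b_r}\,h\sum_{m\ge1}\frac{\abs{b_r h}^{m-1}}{m!}\,C_1^{2m}\,\abs{\vhat}_{1,q-2}^{2m+1}.
\]
Here the CFL restriction~\eqref{eq-cfl} enters only through the resulting bound $h\le c_0/(N+1)$, which keeps the remaining series bounded by a constant once $\abs{\vhat}_{1,q-2}\le M$; this is the source of the factor~$h$ in the lemma.

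I would then chain these estimates through each summand of~\eqref{eq-F}, read from right to left. The operators standing to the right of $\Phihat_\Bhat^{b_r h}-\idhat$ are $\Phihat_\Ahat$'s (seminorm-preserving) and full $\Phihat_\Bhat$'s (polynomially bounded by the first display), so applied to $\vhat$ with $\abs{\vhat}_{1,p-2}\le M$ they produce an intermediate sequence whose $\abs{\cdot}_{1,p-2}$ is bounded by a polynomial in~$M$. Feeding this into the distinguished factor with output index $q=p$ produces the factor~$h$ and an output bounded by $h$ times a polynomial in~$M$; the remaining left-hand $\Phihat_\Ahat$ preserves $\abs{\cdot}_{1,p}$, and summing over the $s$ values of~$r$ yields $\absbig{\Fhat(\vhat)}_{1,p}\le Ch$.

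For the derivatives $\ell=1,\dots,L$ I would use that the operators are $\tau$-independent and multilinear in $\vhat$ and $\overline{\vhat}$: the Leibniz rule distributes the $\ell$ derivatives over the $2m+1$ factors of each convolution product, creating at most $(2m+1)^\ell$ terms in which every factor is some $\drm^{\ell_i}/\drm\tau^{\ell_i}\vhat$ with $\ell_i\le L$. Since the hypothesis bounds all these derivatives by the same~$M$, the collapse-plus-algebra estimate applies verbatim term by term, and differentiation never touches the explicit $h$-coefficients, so the factor~$h$ from the $\idhat$-subtraction survives; propagating through the composition as for $\ell=0$ finishes the proof. \textbf{The main obstacle} is purely bookkeeping: correctly tracking the $p$-index through the truncation of the $m$-series, the drop from $q$ to $q-2$ at the $-\idhat$ factor, and the index shifts along the composition, while controlling the combinatorics of the Leibniz expansion; the genuine analytic input is only~\eqref{eq-algebra} together with $\absbig{\e^{-\iu j^2\nt}}=1$.
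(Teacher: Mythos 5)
Your proposal is correct and takes essentially the same route as the paper's own proof: $\Phihat^{\nt}_{\Ahat}$ is a seminorm isometry, $\Phihat^{\nt}_{\Bhat}$ and $\Phihat^{\nt}_{\Bhat}-\idhat$ are bounded via the convolution algebra property~\eqref{eq-algebra} with the $m\ge1$ start of the series for the $-\idhat$ factor giving both the drop from $q$ to $q-2$ and the single factor~$h$, these bounds are chained through the summands of~\eqref{eq-F}, and derivatives are handled by the Leibniz rule with the $(2m+1)^\ell$ combinatorial factor. The only (cosmetic) difference is that the paper writes the series bounds in closed exponential form, e.g.\ $\gamma_q\,\e^{3^\ell C_1^2\abs{\nt}\gamma_q^2}$ using $(2m+1)^\ell\le 3^{m\ell}$, whereas you keep them as (terminating) sums.
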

\begin{proof}
(a) We first consider the case~$\ell=0$ and recall that~$\Fhat$ is given by~\eqref{eq-F} in Lemma~\ref{lemma-F}, and hence we have to deal with compositions of~$\Phihat^{\nt}_{\Ahat}$, $\Phihat^{\nt}_{\Bhat}$ and~$\Phihat^{\nt}_{\Bhat}-\idhat$. We use that
\[
\absbig{\Phihat^{\nt}_{\Ahat}(\vhat)}_{1,q} = \abs{\vhat}_{1,q}.
\]
In addition, we use that (with the notation~\eqref{eq-zbar})
\begin{align*}
  \absbig{\Phihat^{\nt}_{\Bhat}(\vhat)}_{1,q} &\le \normvbigg{ \sum_{m=0}^\infty \frac{\abs{\nt}^m}{m!}  \!\! \sum_{p_1+\dots+p_{2m+1} \le q} \!\! \absbig{ \vind_{p_1} \ast \dotsm \ast \vind_{p_{m+1}} \ast \overline{\vind_{p_{m+2}}} \ast \dotsm \ast \overline{\vind_{p_{2m+1}}} } }_1\\
                                               &\le \sum_{m=0}^\infty \frac{\abs{\nt}^m}{m!} \normvbigg{ \klabigg{\sum_{p_1=1}^{q-2m} \abs{\vhat_{p_1}}} \ast \dotsm \ast \klabigg{\sum_{p_{2m+1}=1}^{q-2m} \abs{\overline{\vhat_{p_{2m+1}}}}} }_1\\
  & \le \abs{\vhat}_{1,q} \cdot e^{C_1^2 \abs{\nt} \cdot \abs{\vhat}_{1,q}^2},
\end{align*}
where we use the property~\eqref{eq-algebra} of the convolution in the final estimate (and~$C_1$ is the constant from there).
With the same estimates, we also get
\[
  \absbig{\klabig{\Phihat^{\nt}_{\Bhat}-\idhat}(\vhat)}_{1,q} \le
  \abs{\vhat}_{1,q-2} \cdot \klabig{ e^{C_1^2 \abs{\nt} \cdot \abs{\vhat}_{1,q-2}^2} -1 },
\]
since the sum then starts with $m=1$ instead of $m=0$.
The claimed estimate of~$\Fhat$ follows from these estimates on the individual factors in~$\Fhat$.

(b) To prove the claimed estimate for derivatives of~$\Fhat(\zhat)$, we proceed similarly. We use that
\[
\absBig{\sfrac{\drm^\ell}{\drm\tau^\ell} \Phihat^{\nt}_{\Ahat}\klabig{\vhat(\tau)}}_{1,q} = \absBig{\sfrac{\drm^\ell}{\drm\tau^\ell}\vhat(\tau)}_{1,q}.
\]
To derive a bound on $\frac{\drm^\ell}{\drm\tau^\ell}\Phihat^{\nt}_{\Bhat}(\vhat(\tau))$, we use the same estimates as in part~(a) of the proof and in addition
\begin{multline*}
\sum_{p_1+\dots+p_{2m+1} \le q} \absBig{ \sfrac{\drm^\ell}{\drm\tau^\ell} \vind_{p_1} \ast \dotsm \ast \vind_{p_{m+1}} \ast \overline{\vind_{p_{m+2}}} \ast \dotsm \ast \overline{\vind_{p_{2m+1}}} }\\
  \le (2m+1)^\ell \klabigg{ \max_{\ell'=0,\dots,\ell} \sum_{p_1=1}^{q-2m} \absBig{\sfrac{\drm^{\ell'}}{\drm\tau^{\ell'}}  \vind_{p_1} }} \ast \dotsm \ast \klabigg{ \max_{\ell'=0,\dots,\ell} \sum_{p_{2m+1}=1}^{q-2m} \absBig{\sfrac{\drm^{\ell'}}{\drm\tau^{\ell'}} \overline{\vind_{p_{2m+1}}} } }.
\end{multline*}
Using also $(2m+1)^\ell\le 3^{m\ell}$ this yields, with $\gamma_q = \gamma_q(\tau) = \max_{\ell'=0,\dots,\ell} \absBig{\tfrac{\drm^{\ell'}}{\drm\tau^{\ell'}} \vhat(\tau)}_{1,q}$,
\[
  \absBig{\sfrac{\drm^\ell}{\drm\tau^\ell}\Phihat^{\nt}_{\Bhat}\klabig{\vhat(\tau)} }_{1,q}
  \le \gamma_q e^{3^\ell C_1^2 \abs{\nt} \,\gamma_q^2}
\]
and 
\[
  \absBig{\sfrac{\drm^\ell}{\drm\tau^\ell}\klabig{\Phihat^{\nt}_{\Bhat}-\idhat}\klabig{\vhat(\tau)} }_{1,q}
  \le \gamma_{q-2} \klabig{e^{3^\ell C_1^2 \abs{\nt} \,\gamma_{q-2}^2} - 1}.
\]
In this way we get the statement of the lemma also for derivatives of~$\Fhat(\vhat(\tau))$.
\end{proof}

With the previous lemmas, we can now prove the following bounds on the sequences
\[
\zind_p(\tau) = \klabig{z_{j,p}^k(\tau)}_{j\in\disc, k\in\Z}
\]
from Algorithm~\ref{algo-comp} (recall the notation~\eqref{eq-zhatp}). 
We use here the norm $\normv{\cdot}_\sigma$ of~\eqref{eq-normv} and the rescalings
\[
\Labf \vbf = \klabig{\skla{k-j^2} v_j^k}_{j\in\disc,k\in\Z} \myand 
\Kbf \vbf = \klabig{\skla{k} v_j^k}_{j\in\disc,k\in\Z}
\]
of sequences $\vbf = \klabig{v_j^k}_{j\in\disc,\,k\in\mathbb{Z}}\in\mathbb{C}^{\disc\times\mathbb{Z}}$, where~$\skla{\cdot}$ is defined in~\eqref{eq-skla}. 

\begin{mylemma}\label{lemma-size}
For $p=1,\dots,N$ and all $\ell\in\N_0$, we have for the modulation coefficient functions of Algorithm~\ref{algo-comp} 
\[
\normvBig{\Kbf^{1/2} \sfrac{\drm^\ell}{\drm\tau^\ell} \zind_p(\tau)}_0 \lesssim \normvBig{\Labf \sfrac{\drm^\ell}{\drm\tau^\ell} \zind_p(\tau)}_{1} \lesssim 1.
\]
\end{mylemma}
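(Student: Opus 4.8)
The statement splits into two inequalities; the left one compares the two weights $\Kbf^{1/2}$ and $\Labf$ and holds pointwise for any sequence, whereas the right one is the actual bound, which I would prove by induction on~$p$. Since every modulation coefficient function is a polynomial in~$\tau$ of degree $\le N-1$ by~\eqref{eq-degree}, I would carry out this induction on a fixed bounded slow-time interval (say $0\le\tau\le1$, the interval length being absorbed into~$\lesssim$); in particular $\frac{\drm^\ell}{\drm\tau^\ell}\zind_p\equiv0$ for $\ell\ge p$, so only finitely many~$\ell$ require attention.

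For the left inequality it suffices to establish the pointwise weight estimate $\skla{k}^{1/2}\le\skla{j}\,\skla{k-j^2}$, valid for all $j\in\disc$ and $k\in\Z$; indeed, squaring reduces it to
\[
\skla{k}\le\abs{k}+1\le\abs{k-j^2}+j^2+1\le(j^2+1)\klabig{(k-j^2)^2+1}=\skla{j}^2\skla{k-j^2}^2,
\]
where the penultimate step uses $(k-j^2)^2\ge\abs{k-j^2}$. Summing $\skla{k}^{1/2}\absbig{v_j^k}\le\skla{j}\,\skla{k-j^2}\absbig{v_j^k}$ over~$k$, squaring, and summing over~$j$ (with the weight~$\skla{j}^0$ in $\normv{\cdot}_0$ against $\skla{j}^2$ in $\normv{\cdot}_1$) then gives $\normv{\Kbf^{1/2}\vbf}_0\le\normv{\Labf\vbf}_1$ for any~$\vbf$, in particular for $\vbf=\frac{\drm^\ell}{\drm\tau^\ell}\zind_p(\tau)$.

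For the right inequality I would assume as induction hypothesis that $\normv{\Labf\frac{\drm^\ell}{\drm\tau^\ell}\zind_q}_1\lesssim1$ for all $q<p$ and all~$\ell$ (the base cases $p=1,2$ being immediate, as $\Fhat$ then vanishes by Lemma~\ref{lemma-F}(b)). This hypothesis supplies exactly the data needed to invoke Lemma~\ref{lemma-F-bounds}, because $\absbig{\frac{\drm^\ell}{\drm\tau^\ell}\zhat}_{1,q}\le\sum_{r=1}^q\normv{\frac{\drm^\ell}{\drm\tau^\ell}\zind_r}_1\le\sum_{r=1}^q\normv{\Labf\frac{\drm^\ell}{\drm\tau^\ell}\zind_r}_1$. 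I would then split $\zind_p$ into its off-diagonal part ($k\ne j^2$) and its diagonal part ($k=j^2$). For the off-diagonal part I solve~\eqref{eq-modsystem-exp-comput} (and its $\tau$-derivatives) for $z_{j,p}^k$ by dividing through by $\e^{-\iu kh}-\e^{-\iu j^2h}$; Lemma~\ref{lemma-cfl} (together with $\skla{k-j^2}\le\sqrt2\,\abs{k-j^2}$) then bounds $\skla{k-j^2}\absbig{\frac{\drm^\ell}{\drm\tau^\ell}z_{j,p}^k}$ by $h^{-1}$ times the modulus of the right-hand side of the differentiated equation. That right-hand side is $O(h)$ in $\normv{\cdot}_1$: its $\Fhat$-contribution is controlled by $\absbig{\frac{\drm^\ell}{\drm\tau^\ell}\Fhat(\zhat)}_{1,p}\lesssim h$ from Lemma~\ref{lemma-F-bounds}, which dominates the $\normv{\cdot}_1$-norm of the relevant $p$-slice, while every remaining term carries a factor $h^{\ell'}$ with $\ell'\ge1$ multiplying a strictly-lower-order derivative that is $\lesssim1$ by induction. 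Hence $\normv{\Labf\frac{\drm^\ell}{\drm\tau^\ell}\zind_p^{\mathrm{off}}}_1\lesssim h^{-1}\cdot h\lesssim1$.

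On the diagonal~$\Labf$ acts as the identity (since $\skla{k-j^2}=\skla{0}=1$), so it remains to bound $\normv{\frac{\drm^\ell}{\drm\tau^\ell}\zind_p^{\mathrm{diag}}}_1$. Here the prefactor in~\eqref{eq-modsystem-exp-comput} vanishes; following step~(c) of Algorithm~\ref{algo-comp}, I would read $\frac{\drm}{\drm\tau}z_{j,p}^{j^2}$ off the order-$(p+1)$ equation, whose $\ell'=1$ term is $\e^{-\iu j^2h}h\,\frac{\drm}{\drm\tau}z_{j,p}^{j^2}$ and whose remaining terms, as well as $(\Fhat(\zhat))_{j,p+1}^{j^2}$, involve only coefficients of order $\le p-1$; the same compensation $h^{-1}\cdot O(h)=O(1)$ (now via Lemma~\ref{lemma-F-bounds} at index $p+1$, needing only $\zind_r$ with $r\le p-1$) bounds this derivative, and differentiating the resulting identity bounds all higher derivatives, while for $p=N$ step~(d) simply sets the derivative to zero. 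The value $z_{j,p}^{j^2}(0)$ is controlled through the initial condition~\eqref{eq-modsystem-exp-init}: for $p=1$ by the smallness $\ep^{-2}\norm{\psi^0}_1^2\le1$ from~\eqref{eq-init}, and for $p\ge2$ by $\absbig{z_{j,p}^{j^2}(0)}\le\sum_{k\ne j^2}\skla{k-j^2}\absbig{z_{j,p}^k(0)}$, i.e.\ by the already-bounded off-diagonal norm at $\tau=0$ (using~\eqref{eq-offdiag-zero} for $p=2$); integrating the bounded derivative over the unit interval then controls $\zind_p^{\mathrm{diag}}(\tau)$. The main obstacle — and the crux of the whole lemma — is the factor $h^{-1}$ produced by the small denominator $\e^{-\iu kh}-\e^{-\iu j^2h}$: the estimate closes only because this is exactly matched by the $O(h)$ size of~$\Fhat$ furnished by Lemma~\ref{lemma-F-bounds}, and because on the resonant diagonal, where the denominator degenerates, the information is instead transferred to the next order in~$p$, where it no longer does.
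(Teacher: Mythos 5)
Your proposal is correct and follows essentially the same route as the paper's proof: induction on~$p$, splitting $\zind_p$ into its off-diagonal ($k\ne j^2$) and diagonal ($k=j^2$) parts, controlling the off-diagonal part by playing the small-denominator bound of Lemma~\ref{lemma-cfl} against the $O(h)$ bound on~$\Fhat$ from Lemma~\ref{lemma-F-bounds}, and controlling the diagonal part through the order-$(p+1)$ equation for its derivative together with the initial condition~\eqref{eq-modsystem-exp-init}, exactly as in the paper. One harmless imprecision: the case $p=2$ is not quite ``immediate, as $\Fhat$ vanishes'' (the derivative of the diagonal part $z_{j,2}^{j^2}$ is read off the order-$3$ equation, where $\Fhat$ does not vanish), but your general inductive step with base case $p=1$ covers it without change.
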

\begin{proof}
The first inequality follows immediately from \[\skla{k}\lesssim \skla{k-j^2}\skla{j^2} \le \skla{k-j^2}^2 \skla{j}^2.\]

For the proof of the second inequality, we recall that the modulation coefficient functions~$z_{j,p}^k$ are constructed in Algorithm~\ref{algo-comp} for $p=1,2,\dots,N$ consecutively. We therefore proceed by induction on $p=1,2,\dots,N$, similarly to the proof of \cite[Lemma~5.3]{Gauckler2017a}. 

We fix~$p$, and we assume that we have shown the bounds for the polynomials~$\zind_{p'}$ with $p'\le p-1$ and all their derivatives (this is true for~$p=1$). 
To show the claimed bound for $\zind_p=(z_{j,p}^k)_{j\in\disc,k\in\Z}$, we split this sequence into a sequence containing only functions $z_{j,p}^k$ with $k\ne j^2$ and a sequence containing the remaining functions with $k=j^2$, and we prove the claimed bound for both types of functions separately.

We first consider the sequence consisting only of functions $z_{j,p}^k$ with $k\ne j^2$. These are constructed as described in steps~(a) and~(b) of Algorithm~\ref{algo-comp}.
By induction, we get from Lemma~\ref{lemma-F-bounds} for the right-hand side of~\eqref{eq-modsystem-exp-comput} that
\[
\normvbig{\klabig{\Fhat\kla{\zhat(\tau)}}_p }_1 \le \absbig{\Fhat\kla{\zhat(\tau)} }_{1,p} \lesssim h ,
\]
where we write $(\Fhat(\zhat))_p=((\Fhat(\zhat))_{j,p}^k)_{j\in\disc,k\in\Z}$ in accordance with the notation~\eqref{eq-zhatp}.
We also get by induction for the derivatives on the left-hand side of~\eqref{eq-modsystem-exp-comput} that
\[
\normvBig{\sfrac{\drm^{\ell}}{\drm\tau^{\ell}} \zhat_{p-\ell}(\tau) }_1 \lesssim 1 \myfor \ell=1,2,\ldots . 
\]
Using Lemma~\ref{lemma-cfl} to bound $\e^{-\iu k h}-\e^{-\iu j^2 h}$ and the definition of~$\Labf$, this implies the claimed estimate with $\ell=0$ for the sequence of modulation coefficient functions with $k\ne j^2$. The estimate for their higher derivatives follows similarly using in addition that the polynomials have only a bounded number of nonzero derivatives by~\eqref{eq-degree}.

We finally consider functions $z_{j,p}^k$ with indices $k=j^2$, which are constructed as described in step~(c) of Algorithm~\ref{algo-comp}. By induction, we have
\[
\normvbig{\klabig{\Fhat\kla{\zhat(\tau)}}_{p+1} }_1 \lesssim h ,
\]
and we also get for the derivatives on the left-hand side of~\eqref{eq-modsystem-exp-comput} that
\[
\normvBig{\sfrac{\drm^{\ell}}{\drm\tau^{\ell}} \zhat_{p+1-\ell}(\tau) }_1 \lesssim 1 \myfor \ell = 2,3,\ldots.  
\]
This implies the claimed estimate for the first  derivative of the modulation coefficient functions with $k=j^2$. The bound for higher derivatives is obtained similarly. To also get the bound for the functions themselves, we estimate the initial values given by~\eqref{eq-modsystem-exp-init} using~\eqref{eq-init} and the already proven bounds for the functions~$z_{j,p}^k$ with $k\ne j^2$. The same argument also yields the claimed estimates for the functions constructed in step~(d) of Algorithm~\ref{algo-comp}. 
\end{proof}

\subsection{Bounds of the defect}\label{subsec-defect}

Recall from Section~\ref{subsec-constr} that we cannot expect that the modulation functions~\eqref{eq-mfe-exp} with coefficients computed with Algorithm~\ref{algo-comp} satisfy the system~\eqref{eq-modsystem} exactly. In this section, we first derive a formula for the defect in~\eqref{eq-modsystem} and then provide estimates for it.

\begin{mylemma}\label{lemma-defect-formula}
The functions~$z_j^k$ of~\eqref{eq-mfe-exp} whose coefficients~$z_{j,p}^k$ have been computed with Algorithm~\ref{algo-comp} satisfy
\begin{equation}\label{eq-modsystem-defect}
\e^{-\iu k h} z_j^k(\tau+\ep h) + d_j^k(\tau) = \klaBig{ \Phbf^{a_1h}_{\Abf} \circ \Phbf^{b_1h}_{\Bbf} \circ \dotsm \circ \Phbf^{a_sh}_{\Abf} \circ \Phbf^{b_sh}_{\Bbf} \klabig{\zbf(\tau)} }_j^k
\end{equation}
with the defect
\begin{equation}\label{eq-defect}
d_j^k(\tau) = \sum_{p=N+1}^\infty \ep^p \klabig{\Fhat\kla{\zhat(\tau)}}_{j,p}^k - \e^{-\iu k h} \sum_{\ell=1}^\infty \frac{\ep^\ell h^{\ell}}{\ell!}\sum_{p=N-\ell+1}^N \ep^{p} \, \frac{\drm^\ell}{\drm\tau^\ell} \, z_{j,p}^k(\tau) .
\end{equation}
\end{mylemma}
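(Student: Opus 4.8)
The plan is to compute the difference between the two sides of the claimed identity \eqref{eq-modsystem-defect} and to match it term by term with the asserted expression \eqref{eq-defect}, exploiting that Algorithm~\ref{algo-comp} solves the truncated system \eqref{eq-modsystem-exp-comput} exactly for $p=1,\dots,N$. The only genuinely new inputs are Lemma~\ref{lemma-F} and a terminating Taylor expansion; everything else is careful bookkeeping in the index~$p$.

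First I would record the compatibility between the two levels of operators. Inserting the $\ep$-expansion \eqref{eq-mfe-exp}, so that $\zbf=\sum_{p\ge 1}\ep^p\zind_p$ with $\zind_p=0$ for $p>N$, into the composed flow operator and collecting powers of~$\ep$ yields
\begin{equation*}
\klaBig{ \Phbf^{a_1h}_{\Abf} \circ \Phbf^{b_1h}_{\Bbf} \circ \dotsm \circ \Phbf^{a_sh}_{\Abf} \circ \Phbf^{b_sh}_{\Bbf} \klabig{\zbf} }_j^k = \sum_{p=1}^\infty \ep^p \klaBig{ \Phihat^{a_1h}_{\Ahat} \circ \dotsm \circ \Phihat^{b_sh}_{\Bhat} \klabig{\zhat} }_{j,p}^k ,
\end{equation*}
which is exactly the substitution already used to pass from \eqref{eq-modsystem-temp} to \eqref{eq-modsystem-exp}; the convolution algebra property \eqref{eq-algebra} guarantees that each fixed-order coefficient in~$\ep$ is a finite, convergent quantity. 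Applying Lemma~\ref{lemma-F} to split the right-hand side as $\Phihat^h_{\Ahat}+\Fhat$ produces the two contributions $\e^{-\iu j^2 h}\sum_{p=1}^N\ep^p z_{j,p}^k=\e^{-\iu j^2 h}\,z_j^k(\tau)$ and $\sum_{p=1}^\infty \ep^p(\Fhat(\zhat))_{j,p}^k$, and I would split the latter $\Fhat$-sum at $p=N$.

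Then I would treat the low-order part $p\le N$ using the exact solvability from Algorithm~\ref{algo-comp}: equation \eqref{eq-modsystem-exp-comput} gives $(\Fhat(\zhat))_{j,p}^k = (\e^{-\iu k h}-\e^{-\iu j^2 h})\,z_{j,p}^k + \e^{-\iu k h}\sum_{\ell=1}^{p-1}\tfrac{h^\ell}{\ell!}\tfrac{\drm^\ell}{\drm\tau^\ell} z_{j,p-\ell}^k$ for $p=1,\dots,N$. Summing the first group against $\ep^p$ yields $(\e^{-\iu k h}-\e^{-\iu j^2 h})\,z_j^k(\tau)$, which combines with the $\Phihat^h_{\Ahat}$-contribution $\e^{-\iu j^2 h}z_j^k(\tau)$ to give $\e^{-\iu k h}z_j^k(\tau)$. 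The decisive step is reconciling the remaining double sum (from the derivative terms) with a Taylor expansion. Since each $z_{j,p}^k$ is a polynomial of degree at most $p-1\le N-1$ by \eqref{eq-degree}, the Taylor series of $z_j^k(\tau+\ep h)$ terminates at $\ell=N-1$, giving $\e^{-\iu k h}z_j^k(\tau+\ep h)=\e^{-\iu k h}z_j^k(\tau)+\e^{-\iu k h}\sum_{\ell\ge 1}\tfrac{\ep^\ell h^\ell}{\ell!}\tfrac{\drm^\ell}{\drm\tau^\ell}z_j^k(\tau)$. After reindexing $p-\ell=q$, the derivative double sum agrees with this Taylor tail except that its inner summation runs only to $q=N-\ell$ rather than to $N$; the omitted block $q=N-\ell+1,\dots,N$ is precisely the second sum in \eqref{eq-defect}.

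Collecting everything shows that the composed-operator expression equals $\e^{-\iu k h}z_j^k(\tau+\ep h)+d_j^k(\tau)$ with $d_j^k$ as claimed in \eqref{eq-defect}. I would finally note that extending the outer $\ell$-summation in \eqref{eq-defect} from $N-1$ to~$\infty$ is harmless, since $\tfrac{\drm^\ell}{\drm\tau^\ell}z_{j,p}^k$ vanishes for $\ell\ge N>\deg(z_{j,p}^k)$, and that the high-order part $\sum_{p=N+1}^\infty\ep^p(\Fhat(\zhat))_{j,p}^k$ survives unchanged as the first sum of \eqref{eq-defect}. The main obstacle is the careful index bookkeeping in this last reconciliation, in particular verifying that the boundary block of the reindexed derivative sum matches the second term of \eqref{eq-defect} exactly.
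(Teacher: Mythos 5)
Your proposal is correct and follows essentially the same route as the paper's proof: both rest on the exact solvability of~\eqref{eq-modsystem-exp-comput} for $p\le N$, the splitting $\Phihat^{h}_{\Ahat}+\Fhat$ from Lemma~\ref{lemma-F} (together with the compatibility of the $\Phbf$- and $\Phihat$-operators under the $\ep$-expansion), and the terminating Taylor expansion with the reindexing $q=p-\ell$ whose omitted boundary block produces the second sum in~\eqref{eq-defect}. The only difference is the direction of presentation --- you start from the composed operator and substitute, while the paper sums the modulation equations and transforms both sides --- which is immaterial.
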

\begin{proof}
Recall that Algorithm~\ref{algo-comp} solves~\eqref{eq-modsystem-exp-comput} exactly for $p\le N$. Multiplying~\eqref{eq-modsystem-exp-comput} with~$\ep^p$ and summing from $p=1$ to $p=N$, we thus get for $z_j^k$ from~\eqref{eq-mfe-exp}
\[
\e^{-\iu k h} \sum_{p=1}^N \ep^p \sum_{\ell=0}^{p-1} \frac{h^{\ell}}{\ell!} \, \frac{\drm^\ell}{\drm\tau^\ell} \, z_{j,p-\ell}^k(\tau)  = \e^{-\iu j^2 h} z_{j}^k(\tau) + \sum_{p=1}^N \ep^p \klabig{\Fhat\kla{\zhat(\tau)}}_{j,p}^k.
\]
On the left-hand side, we then use that (with $z_{j,p'}^k=0$ for $p'\le 0$)
\begin{align*}
  \sum_{p=1}^N \ep^p \sum_{\ell=0}^{p-1} \frac{h^{\ell}}{\ell!} \frac{\drm^\ell}{\drm\tau^\ell} \, z_{j,p-\ell}^k(\tau) & = \sum_{\ell=0}^\infty \frac{\ep^\ell h^{\ell}}{\ell!} \, \frac{\drm^\ell}{\drm\tau^\ell} \klabigg{ z_j^k(\tau) - \sum_{p=N-\ell+1}^N \ep^{p} z_{j,p}^k(\tau)}\\
  & = z_j^k(\tau+\ep h) - \sum_{\ell=0}^\infty \frac{\ep^\ell h^{\ell}}{\ell!}\sum_{p=N-\ell+1}^N \ep^{p} \, \frac{\drm^\ell}{\drm\tau^\ell} \, z_{j,p}^k(\tau).
\end{align*}
On the right-hand side, we use that
\begin{multline*}
\e^{-\iu j^2 h} z_{j}^k(\tau) + \sum_{p=1}^N \ep^p \klabig{\Fhat\kla{\zhat(\tau)}}_{j,p}^k\\ = \klaBig{ \Phbf^{a_1h}_{\Abf} \circ \Phbf^{b_1h}_{\Bbf} \circ \dotsm \circ \Phbf^{a_sh}_{\Abf} \circ \Phbf^{b_sh}_{\Bbf} \klabig{\zbf(\tau)} }_j^k - \sum_{p=N+1}^\infty \ep^p \klabig{\Fhat\kla{\zhat(\tau)}}_{j,p}^k
\end{multline*}
by Lemma~\ref{lemma-F}. This yields the statement of the lemma.
\end{proof}

\begin{mylemma}\label{lemma-defect}
  For the defect~\eqref{eq-defect}, we have
  \[
\normv{\dbf(\tau)}_1 \lesssim \ep^{N+1} h 
  \]
  and
  \[
    \normv{\Kbf\dbf(\tau)}_1 \lesssim \ep^{N+1}.
  \]
\end{mylemma}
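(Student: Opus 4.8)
The plan is to estimate separately the two sums that make up the defect~\eqref{eq-defect}, and then to rerun the argument with the extra weight~$\skla{k}$ inserted for the second bound. I would write $\dbf=\dbf^{(1)}-\dbf^{(2)}$, where $\dbf^{(1)}$ collects the tail $\sum_{p\ge N+1}\ep^p(\Fhat(\zhat))_{j,p}^k$ and $\dbf^{(2)}$ collects the derivative correction $\e^{-\iu kh}\sum_{\ell\ge1}\frac{\ep^\ell h^\ell}{\ell!}\sum_{p=N-\ell+1}^N\ep^p\frac{\drm^\ell}{\drm\tau^\ell}z_{j,p}^k$.

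For $\dbf^{(1)}$ I would use Lemma~\ref{lemma-F-bounds}. Since $z_{j,q}^k=0$ for $q>N$, only finitely many $\zind_q$ are nonzero, so Lemma~\ref{lemma-size} gives a bound $\abs{\zhat}_{1,p-2}\le M$ with $M$ independent of~$p$; hence $\abs{\Fhat(\zhat)}_{1,p}\le Ch$ uniformly in~$p$. Because the seminorm~\eqref{eq-seminorm} bundles all orders $q\le p$ and $\normv{\cdot}_1$ is monotone under taking absolute values, letting $p\to\infty$ yields $\normv{\sum_{q\ge1}\abs{(\Fhat(\zhat))_q}}_1\le Ch$. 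Using $\ep^p\le\ep^{N+1}$ for $p\ge N+1$ together with this monotonicity then gives $\normv{\dbf^{(1)}}_1\le\ep^{N+1}\normv{\sum_{q\ge1}\abs{(\Fhat(\zhat))_q}}_1\lesssim\ep^{N+1}h$. For $\dbf^{(2)}$ I would use that the $z_{j,p}^k$ are polynomials of degree $\le p-1$ by~\eqref{eq-degree}, so the $\ell$-sum is finite and, in every surviving term, the total power of~$\ep$ is $\ell+p\ge N+1$ while $h^\ell$ carries at least one factor~$h$. Bounding the derivatives by $\normv{\frac{\drm^\ell}{\drm\tau^\ell}\zind_p}_1\lesssim1$ from Lemma~\ref{lemma-size} gives $\normv{\dbf^{(2)}}_1\lesssim\ep^{N+1}h$, and the first claimed estimate follows.

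For the second estimate I would run the same two computations with $\skla{k}$ inserted, trading the spare factor~$h$ for this weight via the step-size restriction~\eqref{eq-cfl}. The key point is the frequency localization: Algorithm~\ref{algo-comp} gives $z_{j,p}^k=0$ unless $\abs{k}\le pK^2$, and Lemma~\ref{lemma-F}(a) propagates this to $(\Fhat(\zhat))_{j,p}^k=0$ unless $\abs{k}\le pK^2$, so on the relevant support $\skla{k}\lesssim pK^2$ and hence $h\skla{k}\lesssim p$ by~\eqref{eq-cfl}. For $\dbf^{(2)}$ one even has $p\le N$, so $h\skla{k}\lesssim1$ absorbs the weight directly and yields $\normv{\Kbf\dbf^{(2)}}_1\lesssim\ep^{N+1}$. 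For $\dbf^{(1)}$ the same device gives $\normv{\Kbf(\Fhat(\zhat))_p}_1\lesssim pK^2\normv{(\Fhat(\zhat))_p}_1\lesssim pK^2h$, so that $\normv{\Kbf\dbf^{(1)}}_1\lesssim K^2h\sum_{p\ge N+1}p\,\ep^p$. I expect the main obstacle to be exactly this last step: although $\zhat$ is supported on $p\le N$, the nonlinearity~$\Fhat$ produces contributions at all orders $p>N$, so one must control an infinite tail, and inserting $\skla{k}$ costs a polynomial factor~$p$ under the sum. Both difficulties are resolved by the uniform-in-$p$ seminorm bound together with $K^2h\lesssim1$ from~\eqref{eq-cfl}, since $\sum_{p\ge N+1}p\,\ep^p\lesssim\ep^{N+1}$ for~$\ep$ small enough; this gives $\normv{\Kbf\dbf^{(1)}}_1\lesssim\ep^{N+1}$ and completes the proof.
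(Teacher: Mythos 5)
Your proposal is correct and follows essentially the same route as the paper's proof: the same splitting of the defect into the $\Fhat$-tail and the derivative correction, the uniform-in-$p$ bound from Lemma~\ref{lemma-F-bounds} combined with Lemma~\ref{lemma-size} and the polynomial degree bound~\eqref{eq-degree} for the first estimate, and the frequency localization $\abs{k}\le pK^2$ from Lemma~\ref{lemma-F}(a) and Algorithm~\ref{algo-comp}(b) traded against the CFL condition~\eqref{eq-cfl} to absorb the weight $\Kbf$ for the second. The only cosmetic difference is that you spell out the monotone-limit argument for the infinite tail and apply $K^2h\lesssim 1$ at the end rather than writing $\skla{k}\lesssim p\,h^{-1}$ directly, which is the same computation in a different order.
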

\begin{proof}
  The defect~$\dbf$ is given by~\eqref{eq-defect}. 
  The first estimate follows from the bounds of Lemma~\ref{lemma-size} on the modulation coefficient functions $z_{j,p}^k$, the bounds of Lemma~\ref{lemma-F-bounds} on~$\Fhat$ (which are uniform in $p$) and the fact that the functions~$z_{j,p}^k$ are polynomials of degree (at most) $N-1$ by~\eqref{eq-degree}.

  For the second estimate, we use that $\skla{k}\lesssim p K^2$ whenever $(\Fhat(\zhat(\tau)))_{j,p}^k\ne 0$ by Lemma~\ref{lemma-F}~(a) and step~(b) of Algorithm~\ref{algo-comp}. By the step-size restriction~\eqref{eq-cfl}, we thus have $\skla{k}\lesssim p (N+1)^{-1} h^{-1} c_0$ whenever $(\Fhat(\zhat(\tau)))_{j,p}^k\ne 0$. From Lemmas~\ref{lemma-F-bounds} and~\ref{lemma-size}, we then get
  \[
\normvbigg{\Kbf \!\! \sum_{p=N+1}^\infty \ep^p \klabig{\Fhat\kla{\zhat(\tau)}}_{p}}_1 \lesssim h^{-1} \!\! \sum_{p=N+1}^\infty \ep^p p \, \normvbig{ \klabig{\Fhat\kla{\zhat(\tau)}}_{p}}_1 \lesssim \sum_{p=N+1}^\infty \ep^p p \lesssim \ep^{N+1}.
\]
To deal with the derivatives in the defect, we similarly use that $\skla{k}\le N K^2\le h^{-1} c_0$ whenever $z_{j,p}^k\ne 0$ by step~(b) of Algorithm~\ref{algo-comp} and the step-size restriction~\eqref{eq-cfl}. By Lemma~\ref{lemma-size}, this yields
  \[
\normvbigg{\Kbf \sum_{\ell=1}^\infty \frac{\ep^\ell h^{\ell}}{\ell!}\sum_{p=N-\ell+1}^N \ep^{p} \, \frac{\drm^\ell}{\drm\tau^\ell} \, \zhat_p(\tau)}_1 \lesssim \e^{N+1}.
  \]
This finally implies the claimed estimate of~$\Kbf\dbf$.
\end{proof}

We remark that the estimates of the defect are simpler than in~\cite[Section~5.6]{Gauckler2017a} which is due to the fact that we use here the stronger CFL-type step-size restriction~\eqref{eq-cfl}.

\subsection{Approximation of the numerical solution}\label{subsec-error}

The modulation functions~$z_j^k$ of~\eqref{eq-mfe-exp} with the coefficients~$z_{j,p}^k$ of Algorithm~\ref{algo-comp} give an approximation~$\widetilde\psi^n=\widetilde\psi(\cdot,t_n)$ to the numerical solution~$\psi^n$ of~\eqref{eq-method} via~\eqref{eq-mfe} and~\eqref{eq-mfe-approx}. 
This approximation satisfies the defining relation~\eqref{eq-method} of the method up to a small defect, as shown in the following lemma. 

\begin{mylemma}\label{lemma-eqpsitilde}
We have 
\begin{equation}\label{eq-method-approx-init}
  \widetilde{\psi}^0 = \psi^0
\end{equation}
and 
\begin{equation}\label{eq-method-approx}
\widetilde{\psi}^{n+1} + \delta^n = \phi^{a_1h}_{A} \circ \phi^{b_1h}_{B} \circ \phi^{a_2h}_{A} \circ \phi^{b_2h}_{B} \circ \dotsm \circ \phi^{a_sh}_{A} \circ \phi^{b_sh}_{B} \klabig{\widetilde{\psi}^n}
\end{equation}
with
\[
\delta^n(x) = \sum_{j\in\disc} \sum_{k\in\Z} d_j^k(\ep t_n) \e^{-\iu kt_n} \e^{\iu j x},
\]
where $d_j^k$ is the defect~\eqref{eq-defect}.
\end{mylemma}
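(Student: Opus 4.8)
The plan is to verify the two assertions by direct computation, since the conceptual work has already been carried out in the construction of the operators $\Phbf_\Abf^\nt$, $\Phbf_\Bbf^\nt$ and in the defect formula of Lemma~\ref{lemma-defect-formula}.

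For the initial relation \eqref{eq-method-approx-init} I would evaluate the modulated Fourier expansion \eqref{eq-mfe} at $t=t_0=0$. Since $\e^{-\iu k\cdot 0}=1$, the $j$-th Fourier coefficient of $\widetilde\psi^0$ equals $\sum_{k\in\Z} z_j^k(0)$. Inserting the expansion \eqref{eq-mfe-exp} together with the initial conditions \eqref{eq-modsystem-exp-init} enforced in steps~(c) and~(d) of Algorithm~\ref{algo-comp}, all contributions with $p\ge 2$ vanish and the term $p=1$ gives $\ep\cdot\ep^{-1}\psi_j^0=\psi_j^0$. Hence $(\widetilde\psi^0)_j=\psi_j^0$ for every $j\in\disc$, which is \eqref{eq-method-approx-init}.

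For the step relation \eqref{eq-method-approx} I would again pass to Fourier space and keep careful track of the various time variables. The $j$-th Fourier coefficient of $\widetilde\psi^n=\widetilde\psi(\cdot,t_n)$ is $\sum_{k\in\Z} z_j^k(\tau_n)\e^{-\iu k t_n}$ with $\tau_n=\ep t_n$, which is exactly of the form required in the defining relations \eqref{eq-prop-phiA} and \eqref{eq-prop-phiB} of $\Phbf_\Abf^\nt$ and $\Phbf_\Bbf^\nt$, with $\vbf=\zbf(\tau_n)$. Iterating these relations (as in the System for the modulation functions) yields
\[
\klabig{\phi^{a_1h}_A\circ\dotsm\circ\phi^{b_sh}_B(\widetilde\psi^n)}_j = \sum_{k\in\Z}\klaBig{\Phbf^{a_1h}_\Abf\circ\dotsm\circ\Phbf^{b_sh}_\Bbf\klabig{\zbf(\tau_n)}}_j^k\e^{-\iu k t_n}.
\]
Into the right-hand side I would substitute the defect identity \eqref{eq-modsystem-defect} of Lemma~\ref{lemma-defect-formula}, which replaces the bracket by $\e^{-\iu k h}z_j^k(\tau_n+\ep h)+d_j^k(\tau_n)$.

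The remaining step is a synchronization of the phase and slow-time arguments, and this is the only point requiring care. Using $\tau_n+\ep h=\ep(t_n+h)=\tau_{n+1}$ and $\e^{-\iu k h}\e^{-\iu k t_n}=\e^{-\iu k t_{n+1}}$, the part carrying $z_j^k$ becomes $\sum_{k\in\Z} z_j^k(\tau_{n+1})\e^{-\iu k t_{n+1}}=(\widetilde\psi^{n+1})_j$ by \eqref{eq-mfe}, and the part carrying $d_j^k$ is $\sum_{k\in\Z} d_j^k(\tau_n)\e^{-\iu k t_n}=(\delta^n)_j$. Multiplying by $\e^{\iu j x}$ and summing over $j\in\disc$ gives \eqref{eq-method-approx}. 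I do not expect a genuine obstacle here: the factor $\e^{-\iu k h}$ produced by the defect formula must combine with the fast phase $\e^{-\iu k t_n}$ to advance it to $t_{n+1}$ exactly when the slow argument passes from $\tau_n$ to $\tau_{n+1}$, so that the shifted modulation functions reassemble into $\widetilde\psi^{n+1}$ rather than a mismatched expression.
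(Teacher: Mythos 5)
Your proposal is correct and follows essentially the same route as the paper's proof: the initial condition \eqref{eq-method-approx-init} comes from the exactly satisfied initial conditions \eqref{eq-modsystem-exp-init} enforced in steps~(c) and~(d) of Algorithm~\ref{algo-comp}, and \eqref{eq-method-approx} comes from multiplying the defect identity \eqref{eq-modsystem-defect} by $\e^{-\iu k t_n}\e^{\iu j x}$ and summing over $k$ and $j$, using the correspondence \eqref{eq-prop-phiA}--\eqref{eq-prop-phiB} between the flows $\phi_A^{\nt},\phi_B^{\nt}$ and the operators $\Phbf_\Abf^{\nt},\Phbf_\Bbf^{\nt}$. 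Your explicit bookkeeping of the phases ($\e^{-\iu k h}\e^{-\iu k t_n}=\e^{-\iu k t_{n+1}}$, $\tau_n+\ep h=\tau_{n+1}$) is exactly the step the paper leaves implicit.
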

\begin{proof}
The modulation functions~$z_j^k$ of~\eqref{eq-mfe-exp}
satisfy the system~\eqref{eq-modsystem} up to a defect~$d_j^k$, see~\eqref{eq-modsystem-defect} in Lemma~\ref{lemma-defect-formula}, and satisfy~\eqref{eq-modsystem-init} exactly, see steps~(c) and~(d) of Algorithm~\ref{algo-comp}.
  The property~\eqref{eq-method-approx-init} is thus clear by construction, and~\eqref{eq-method-approx} follows by  
multiplying~\eqref{eq-modsystem-defect} with $\e^{-\iu k t_n} \e^{\iu j x}$ and summing over $k\in\Z$ and $j\in\disc$.
\end{proof}

Before studying the size of the error $\psi^n-\widetilde{\psi}^n$, we consider~$\psi^n$ and~$\widetilde{\psi}^n$ separately. We introduce, for $r=1,\dots,s$, the abbreviations
\[
  \psi^{n+\frac{r}{s}} = \phi^{a_{s+1-r}h}_{A} \circ \phi^{b_{s+1-r}h}_{B} \klabig{\psi^{n+\frac{r-1}{s}}}
\]
for the intermediate values of the numerical method~\eqref{eq-method} (note that this is well-defined: $\psi^{n+\frac{s}{s}}=\psi^{n+1}$). We also introduce, but only for $r=1,\dots,s-1$, the abbreviations
\[
  \widetilde{\psi}^{n+\frac{r}{s}} = \phi^{a_{s+1-r}h}_{A} \circ \phi^{b_{s+1-r}h}_{B} \klabig{\widetilde\psi^{n+\frac{r-1}{s}}}
\]
for the intermediate values of the approximation by a modulated Fourier expansion (note that an extension of this definition to $r=s$ would be in conflict with the defect in~\eqref{eq-method-approx}). 

\begin{mylemma}\label{lemma-size-psitilde}
  We have, for $r=0,1,\dots,s$,
  \[
    \normbig{\psi^{n+\frac{r}{s}}}_1 \le 2\ep \myand \normbig{\widetilde\psi^{n+\frac{r}{s}}}_1 \lesssim \ep \myfor 0\le t_n=n h\le\ep^{-1}.
\]
\end{mylemma}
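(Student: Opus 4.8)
The plan is to derive both bounds directly from $H^1$-estimates of the two elementary flows~$\phi_A^\nt$ and~$\phi_B^\nt$, combined in a bootstrap in~$n$ over the short interval $0\le t_n=nh\le\ep^{-1}$. The bound for the numerical solution~$\psi^n$ will follow from a Gronwall-type accumulation of these flow estimates, and the bound for the approximation~$\widetilde\psi^n$ will follow from the expansion~\eqref{eq-mfe-exp}, Lemma~\ref{lemma-size}, and the same flow estimates for the intermediate stages. I emphasise that this size lemma should \emph{not} invoke the defect comparison of Lemmas~\ref{lemma-defect-formula} and~\ref{lemma-defect}; on the contrary, it provides exactly the a~priori bounds on~$\psi^n$ and~$\widetilde\psi^n$ that such a comparison needs later.

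The heart of the proof is an $H^1$-estimate of the flows. By~\eqref{eq-flowA-fourier} the flow~$\phi_A^\nt$ acts by multiplication with the unimodular factors~$\e^{-\iu j^2\nt}$ and is hence a linear isometry for~$\norm{\cdot}_1$. For~$\phi_B^\nt$ I would use that~$\eta$ is a trigonometric polynomial of degree~$K$, so $\mathcal{Q}^K\eta=\eta$, and split
\[
\phi_B^\nt(\eta)=\mathcal{Q}^K\klabig{\e^{-\iu\nt\abs{\eta}^2}\eta}=\eta+\mathcal{Q}^K\klabig{\klabig{\e^{-\iu\nt\abs{\eta}^2}-1}\eta}.
\]
Using the boundedness of the interpolation~$\mathcal{Q}^K$ on~$H^1(\mathbb{T})$ (uniformly in~$K$, by the $H^s$-stability of trigonometric interpolation for $s>\tfrac12$), the Sobolev embedding $\norm{\eta}_{L^\infty}\lesssim\norm{\eta}_1$ in one dimension, and the algebra property of~$\norm{\cdot}_1$, the correction term satisfies $\norm{\mathcal{Q}^K((\e^{-\iu\nt\abs{\eta}^2}-1)\eta)}_1\lesssim\abs{\nt}\,\norm{\eta}_1^3$ (from $\abs{\e^{\iu x}-1}\le\abs{x}$), so that
\[
\norm{\phi_B^\nt(\eta)}_1\le\norm{\eta}_1\klabig{1+C\abs{\nt}\,\norm{\eta}_1^2}.
\]
The decisive point is that the interpolation constant multiplies only the small correction and not the leading term~$\eta$, so the growth factor is $1+O(\abs{\nt}\,\norm{\eta}_1^2)$ and not a fixed constant larger than one.

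With this estimate I would bound~$\psi^n$ by a bootstrap in~$n$: assuming $\norm{\psi^{m+r/s}}_1\le2\ep$ for all $m\le n$ and $r=0,\dots,s$ (trivially true at $n=0$, since $\norm{\psi^0}_1\le\ep$ by~\eqref{eq-init}), I apply the isometry of~$\phi_A^\nt$ and the above estimate of~$\phi_B^\nt$ at each of the~$s$ stages of every step and multiply the growth factors. Because $\nt=b_rh$ at the $\phi_B$-stages and the total of $\abs{b_r}h$ over all stages up to step~$n$ is $\lesssim nh\le\ep^{-1}$, the accumulated exponent is $\lesssim(nh)\,\ep^2\le\ep$, whence
\[
\norm{\psi^{n+r/s}}_1\le\norm{\psi^0}_1\,\e^{C(nh)\ep^2}\le\ep\,\e^{C\ep}.
\]
For $\ep\le\ep_0$ small enough that $\e^{C\ep}<2$ this is strictly below~$2\ep$, which closes the bootstrap and yields the first bound for all~$r$. (Note $h\le\ep^{-1}$ on this interval, so each individual factor is already $1+O(\ep)$ close to one.)

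For~$\widetilde\psi^n$ at the full steps I would estimate $\norm{\widetilde\psi^n}_1\le\normv{\zbf(\ep t_n)}_1$ from~\eqref{eq-mfe} and~\eqref{eq-normv}, expand $\zbf=\sum_{p=1}^N\ep^p\zind_p$, and invoke Lemma~\ref{lemma-size} to obtain $\normv{\zind_p}_1\lesssim1$ for the relevant slow times $\tau=\ep t_n\in[0,1]$; this gives $\norm{\widetilde\psi^n}_1\lesssim\ep$, and the intermediate values $\widetilde\psi^{n+r/s}$ are then controlled by the same flow estimates. The main obstacle is precisely this $H^1$-estimate of the \emph{exact} nonlinear flow~$\phi_B^\nt$: one must control $\mathcal{Q}^K(\e^{-\iu\nt\abs{\eta}^2}\eta)$ in~$\norm{\cdot}_1$ with a growth factor $1+O(\abs{\nt}\,\norm{\eta}_1^2)$ despite the aliasing produced by the interpolation~$\mathcal{Q}^K$ acting on the non-band-limited full phase~$\e^{-\iu\nt\abs{\eta}^2}$. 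It is exactly the combination $\mathcal{Q}^K\eta=\eta$ together with the smallness of the phase (entering only through the correction term) that confines the interpolation constant to the $O(\abs{\nt}\,\norm{\eta}_1^2)$ part and keeps the accumulated growth at $\e^{C\ep}<2$ over the whole interval of length~$\ep^{-1}$.
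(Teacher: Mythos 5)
Your proposal is correct, and its overall architecture is the same as the paper's: part (a) is a bootstrap over the $ns+r$ stages using that $\phi_A^{\nt}$ is an $H^1$-isometry and that $\phi_B^{\nt}$ grows the $H^1$-norm by a factor $1+O(\abs{\nt}\norm{\eta}_1^2)$, closed by choosing $\ep_0$ so that the accumulated factor over $t_n\le\ep^{-1}$ stays below $2$; part (b) gets $\norm{\widetilde\psi^n}_1\lesssim\ep$ from Lemma~\ref{lemma-size} via \eqref{eq-mfe}, \eqref{eq-mfe-approx}, \eqref{eq-mfe-exp} and then propagates through the intermediate stages with the same flow estimates, without ever touching the defect — exactly as in the paper.

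The one genuine divergence is how the flow estimate for $\phi_B^{\nt}$ is obtained. The paper expands $\phi_B^{\nt}(\eta)=\sum_m \frac{(-\iu\nt)^m}{m!}\mathcal{Q}^K(\abs{\eta}^{2m}\eta)$ and applies the interpolated-product estimate $\norm{\mathcal{Q}^K(uv)}_1\le C\norm{u}_1\norm{v}_1$ (Hairer--Lubich) termwise, resumming to $\norm{\phi_B^{\nt}(\eta)}_1\le \e^{C^2\abs{\nt}\norm{\eta}_1^2}\norm{\eta}_1$; this reuses precisely the aliasing-compatible algebra machinery (the discrete-convolution property \eqref{eq-algebra}) that the rest of the analysis is built on. You instead split off the identity, $\phi_B^{\nt}(\eta)=\eta+\mathcal{Q}^K((\e^{-\iu\nt\abs{\eta}^2}-1)\eta)$, and bound the correction by $\abs{\nt}\norm{\eta}_1^3$ using the pointwise bound $\abs{\e^{\iu x}-1}\le\abs{x}$, the chain rule, Sobolev embedding, the $H^1$-algebra property, and the $K$-uniform $H^1$-stability of trigonometric interpolation. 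This is valid (the stability constant is indeed uniform in $K$ for $H^s$, $s>\tfrac12$, in one dimension, by the standard aliasing computation) and gives a per-stage factor $1+C\abs{\nt}\norm{\eta}_1^2$, marginally sharper than the exponential but equivalent for the bootstrap. What it costs is an extra external ingredient — interpolation stability on non-band-limited functions — which the paper deliberately avoids by keeping everything inside the interpolated-product/convolution framework; if you use your route, you should cite or prove that stability result rather than treat it as free.
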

\begin{proof}
(a) We first prove the estimate of $\psi^{n+\frac{r}{s}}$. To this end, we use that (see~\eqref{eq-flows})
  \begin{equation}\label{eq-boundsflows}
\norm{\phi_A^{\nt}(\eta)}_1 = \norm{\eta}_1 \myand \norm{\phi_B^{\nt}(\eta)}_1 \le \exp\klabig{C^2 \norm{\eta}_1^2 \abs{\nt}} \norm{\eta}_1,
  \end{equation}
  where~$\norm{\cdot}_1$ is the Sobolev $H^1(\mathbb{T})$-norm~\eqref{eq-norm} and where~$C$ is a constant such that $\norm{\mathcal{Q}^K(u v)}_1\le C \norm{u}_1\norm{v}_1$ for all $u,v\in H^1(\mathbb{T})$ (such a constant exists, see, for example, \cite[Lemma~4.2]{Hairer2008}). These estimates imply, for $r=1,\dots,s$,
  \[
\normbig{\psi^{n+\frac{r}{s}}}_1 \le \exp\klaBig{C^2 \normbig{\psi^{n+\frac{r-1}{s}}}_1^2 \abs{b_{s+1-r}} \, h} \normbig{\psi^{n+\frac{r-1}{s}}}_1.
\]
For sufficiently small~$\ep$ such that $\exp \klabig{4 C^2 \ep \max_{r'=1,\dots,s} \abs{b_{r'}} \, s \, (1+\ep h)}\le 2$ and as long as $t_n=n h\le \ep^{-1}$, we get by solving this recursion
\[
\normbig{\psi^{n+\frac{r}{s}}}_1 \le \exp \klaBig{4 C^2 \ep^2 \max_{r'=1,\dots,s} \abs{b_{r'}} (n s + r)h} \ep \le 2\ep,
\]
where the smallness~\eqref{eq-init} of $\psi^0$ is used.

(b) For the estimate of $\widetilde\psi^{n+\frac{r}{s}}$, we first consider the case $r=0$. In this case, the claimed estimate is an immediate consequence of Lemma~\ref{lemma-size} on the size of the modulation coefficient functions~$z_{j,p}^k$ that make up the modulation functions~$z_j^k$ (see in particular~\eqref{eq-mfe}, \eqref{eq-mfe-approx} and~\eqref{eq-mfe-exp}). In the case $0<r<s$, we proceed as in step~(a), but we solve the recursion
\[
\normbig{\widetilde\psi^{n+\frac{r}{s}}}_1 \le \exp\klaBig{C^2 \normbig{\widetilde\psi^{n+\frac{r-1}{s}}}_1^2 \abs{b_{s+1-r}} \, h} \normbig{\widetilde\psi^{n+\frac{r-1}{s}}}_1
\]
only up to $\widetilde\psi^n$. This yields the claimed estimate for the intermediate values~$\widetilde\psi^{n+\frac{r}{s}}$.
\end{proof}

With Lemmas~\ref{lemma-eqpsitilde} and~\ref{lemma-size-psitilde} we can now prove the following result on the error $\psi^n-\widetilde\psi^n$.

\begin{mylemma}\label{lemma-error-psitilde}
  We have
  \[
  \norm{\psi^n-\widetilde\psi^n}_1 \lesssim \ep^{N} \myfor 0\le t_n=n h\le\ep^{-1}.
\]
\end{mylemma}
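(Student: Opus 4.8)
The plan is to close the estimate with a discrete Gronwall (Lady-Windermere's-fan) argument on the error $e^n := \psi^n-\widetilde\psi^n$, whose two ingredients are a local Lipschitz bound for the one-step map of the method and the defect estimate of Lemma~\ref{lemma-defect}. By~\eqref{eq-method-approx-init} I start from $e^0=0$. Subtracting the exact step $\psi^{n+1}=\phi^{a_1h}_A\circ\dots\circ\phi^{b_sh}_B(\psi^n)$ from the perturbed step~\eqref{eq-method-approx} of Lemma~\ref{lemma-eqpsitilde}, I would write
\[
e^{n+1} = \phi^{a_1h}_A\circ\dots\circ\phi^{b_sh}_B(\psi^n) - \phi^{a_1h}_A\circ\dots\circ\phi^{b_sh}_B(\widetilde\psi^n) + \delta^n,
\]
so the task reduces to controlling how the one-step map amplifies the difference of its two arguments, together with a bound on $\delta^n$.

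For the amplification I would propagate the difference through the $2s$ substeps, comparing the intermediate values $\psi^{n+r/s}$ and $\widetilde\psi^{n+r/s}$. The linear flows $\phi_A^\nt$ are $H^1$-isometries by the first part of~\eqref{eq-boundsflows} and hence leave the norm of a difference unchanged. For the nonlinear flows I need the difference analogue of the second bound in~\eqref{eq-boundsflows}: for arguments with $\norm{\eta}_1,\norm{\zeta}_1\lesssim\ep$,
\[
\norm{\phi_B^\nt(\eta)-\phi_B^\nt(\zeta)}_1 \le \bigl(1+C\ep^2\abs{\nt}\bigr)\,\norm{\eta-\zeta}_1 .
\]
This follows from the series $\phi_B^\nt(\eta)=\sum_{m\ge 0}\frac{(-\iu\nt)^m}{m!}\mathcal{Q}^K(\eta^{m+1}\overline{\eta}^{m})$: in each multilinear term of degree $2m+1$ the difference telescopes into $2m+1$ summands, each carrying one factor $\eta-\zeta$ (or its conjugate) and $2m$ factors of size $\ep$; by the algebra property $\norm{\mathcal{Q}^K(uv)}_1\le C\norm{u}_1\norm{v}_1$ these contribute $\lesssim \abs{\nt}^m\ep^{2m}/m!\,\norm{\eta-\zeta}_1$, the $m=0$ term giving exactly $\norm{\eta-\zeta}_1$ and the remaining terms summing to the factor $1+C\ep^2\abs{\nt}$ once $\ep^2\abs{\nt}$ is bounded. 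Since all intermediate values stay $\lesssim\ep$ by Lemma~\ref{lemma-size-psitilde} and $\abs{\nt}=\abs{b_{s+1-r}}h\lesssim h$ at each nonlinear substep, taking the product over the $s$ substeps yields
\[
\norm{e^{n+1}}_1 \le \bigl(1+C\ep^2 h\bigr)\,\norm{e^n}_1 + \norm{\delta^n}_1 .
\]

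To finish I would bound the defect by $\norm{\delta^n}_1\le\normv{\dbf(\ep t_n)}_1\lesssim\ep^{N+1}h$, using for the first inequality that $\bigl\lvert\sum_k d_j^k\e^{-\iu kt_n}\bigr\rvert\le\sum_k\abs{d_j^k}$ and for the second the first estimate of Lemma~\ref{lemma-defect}. Solving the recursion from $e^0=0$ gives
\[
\norm{e^n}_1 \le \sum_{m=0}^{n-1}\bigl(1+C\ep^2h\bigr)^{\,n-1-m}\norm{\delta^m}_1 \lesssim n\,\exp\bigl(C\ep^2 hn\bigr)\,\ep^{N+1}h .
\]
On the interval $t_n=nh\le\ep^{-1}$ we have $\ep^2 hn=\ep^2 t_n\le\ep\le 1$, so the amplification factor $\exp(C\ep^2 hn)$ is $O(1)$ and $nh=t_n\le\ep^{-1}$, whence $\norm{e^n}_1\lesssim t_n\,\ep^{N+1}\le\ep^N$, as claimed.

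The only genuinely nontrivial step is the local Lipschitz bound for $\phi_B^\nt$ with constant $1+O(\ep^2 h)$; everything else is the isometry of $\phi_A^\nt$, the size bounds of Lemma~\ref{lemma-size-psitilde}, the defect bound of Lemma~\ref{lemma-defect} and discrete Gronwall. The quadratic-in-$\ep$ gain there---reflecting that the cubic nonlinearity has a derivative of size $\ep^2$ at functions of size $\ep$---is precisely what keeps the total amplification $\exp(C\ep^2 t_n)$ bounded for $t_n\le\ep^{-1}$, and hence what upgrades the per-step defect $\ep^{N+1}h$ to the global error $\ep^N$.
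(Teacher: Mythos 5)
Your proposal is correct and follows essentially the same route as the paper's proof: subtracting the perturbed step of Lemma~\ref{lemma-eqpsitilde} from the exact step, propagating the difference through the substeps via the $H^1$-isometry of $\phi_A^{\nt}$ and a Lipschitz bound $\norm{\phi_B^{\nt}(\eta)-\phi_B^{\nt}(\zeta)}_1\le\exp\kla{C\ep^2\abs{\nt}}\norm{\eta-\zeta}_1$ (your $1+C\ep^2\abs{\nt}$ is the same bound) obtained from the series for $\phi_B^{\nt}$ together with Lemma~\ref{lemma-size-psitilde}, then closing with the defect bound $\norm{\delta^n}_1\le\normv{\dbf(\ep t_n)}_1\lesssim\ep^{N+1}h$ from Lemma~\ref{lemma-defect}, $\psi^0=\widetilde\psi^0$, and a discrete Gronwall argument over $t_n\le\ep^{-1}$. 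No gaps.
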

\begin{proof}
  We subtract~\eqref{eq-method-approx} from~\eqref{eq-method}. We then use the Lipschitz properties
  \[
    \norm{\phi_A^{\nt}(\eta)-\phi_A^{\nt}(\widetilde\eta)}_1 = \norm{\eta-\widetilde\eta}_1
  \]
  and
  \begin{align*}
    \norm{\phi_B^{\nt}(\eta)-\phi_B^{\nt}(\widetilde\eta)}_1 &\le \sum_{m=0}^\infty \frac{\abs{\nt}^m}{m!} (2m+1) C^{2m} \max\klabig{\norm{\eta}_1,\norm{\widetilde\eta}_1}^{2m} \norm{\eta-\widetilde\eta}_1 \\
    &\le \exp\klaBig{3 C^2 \max\klabig{\norm{\eta}_1,\norm{\widetilde\eta}_1}^2 \abs{\nt}} \norm{\eta-\widetilde\eta}_1
  \end{align*}
  of the flows, which can be obtained similarly to the boundedness properties~\eqref{eq-boundsflows} (with the same constant~$C$ as there). By Lemma~\ref{lemma-size-psitilde} on the size of~$\psi$ and~$\widetilde\psi$, these estimates imply, for $r=1,\dots,s-1$,
  \[
\normbig{\psi^{n+\frac{r}{s}}-\widetilde\psi^{n+\frac{r}{s}}}_1 \le \exp\kla{C' \ep^2 h} \normbig{\psi^{n+\frac{r-1}{s}}-\widetilde\psi^{n+\frac{r-1}{s}}}_1
\]
with some constant~$C'$ that is independent of~$n$ as long as $t_n\le \ep^{-1}$. Using in addition~\eqref{eq-method-approx} from Lemma~\ref{lemma-eqpsitilde}, we also get
\[
\normbig{\psi^{n+1}-\widetilde\psi^{n+1}}_1 \le \exp\kla{C' \ep^2 h} \normbig{\psi^{n+\frac{s-1}{s}}-\widetilde\psi^{n+\frac{s-1}{s}}}_1 + \norm{\delta^n}_1,
\]
and hence
\[
\normbig{\psi^{n+1}-\widetilde\psi^{n+1}}_1 \le \exp\kla{C' \ep^2 s h} \normbig{\psi^{n}-\widetilde\psi^{n}}_1 + \norm{\delta^n}_1.
\]
Solving this recursion yields
\[
\normbig{\psi^n-\widetilde\psi^{n}}_1 \le \exp\klabig{C' \ep^2 s \, t_n} \normbig{\psi^0-\widetilde\psi^0}_1 + \sum_{\ell=0}^{n-1} \exp\klabig{C' \ep^2 s \, t_{n-1-\ell}} \norm{\delta^\ell}_1. 
\]
To get the statement of the Lemma, we use that $\psi^0=\widetilde\psi^0$ by Lemma~\ref{lemma-eqpsitilde} and that $\norm{\delta^\ell}_1 \le \normv{\dbf(\ep t_\ell)}_1 \lesssim \ep^{N+1} h$ by Lemma~\ref{lemma-defect}.
\end{proof}

\section{An almost-invariant of the modulated Fourier expansion}\label{sec-invariants}

We work under the same assumptions as in Sections~\ref{sec-mfe} and~\ref{sec-estimates}. In particular, we assume that the CFL-type step-size restriction~\eqref{eq-cfl} holds and that the initial value is small~\eqref{eq-init}. Again, we denote by~$\lesssim$ an inequality up to a constant that is independent of the small parameter~$\ep$, the time step-size~$h$ and the spatial discretization parameter~$K$.

Our goal in this section is to derive an almost-invariant of the modulated Fourier expansion (as constructed in Section~\ref{sec-mfe} with Algorithm~\ref{algo-comp}) and to establish its relation with the squared $H^1(\mathbb{T})$-norm of the numerical solution. This is based on the observation that the system~\eqref{eq-modsystem} for the modulation functions is a splitting method.

\begin{mylemma}\label{lemma-flows}
  The operators $\Phbf_\Abf^\tau$ and $\Phbf_\Bbf^\tau$ given by~\eqref{eq-Phbf-A} and~\eqref{eq-Phbf-B} are the time-$\tau$ flows of
  \[
    \iu \, \sfrac{\drm}{\drm\tau} \ubf = \Abf\ubf \myand \iu \, \sfrac{\drm}{\drm\tau} \ubf = \Bbf(\ubf)
\]
with the linear operator $(\Abf\ubf)_j^k=j^2 u_j^k$ and the nonlinear operator $\Bbf(\ubf)=\ubf \ast \ubf \ast \overline{\ubf}$. Both of these differential equations have
\[
 \Ec(\vbf) = \sum_{j\in\disc} \sum_{k\in\Z} (k+1) \absbig{v_j^k}^2
\]
as a conserved quantity, i.e., $\Ec(\Phbf_\Abf^\tau(\vbf)) = \Ec(\vbf)$ and $\Ec(\Phbf_\Bbf^\tau(\vbf)) = \Ec(\vbf)$ for all times~$\tau$.
\end{mylemma}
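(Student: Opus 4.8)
The plan is to verify each of the three assertions by direct computation, treating the flow-property and the conservation-property separately. First I would confirm that $\Phbf_\Abf^\tau$ and $\Phbf_\Bbf^\tau$ solve the stated differential equations. For the linear part, the equation $\iu\,\frac{\drm}{\drm\tau}u_j^k = j^2 u_j^k$ has the explicit solution $u_j^k(\tau)=\e^{-\iu j^2\tau}u_j^k(0)$, which is exactly the formula~\eqref{eq-Phbf-A} for $\Phbf_\Abf^\tau$; so this direction is immediate. For the nonlinear part, I would check that~\eqref{eq-Phbf-B}, namely $\Phbf_\Bbf^\tau(\vbf)=\sum_{m\ge 0}\frac{(-\iu\tau)^m}{m!}\,\vbf^{\ast(m+1)}\ast\overline{\vbf}^{\ast m}$, is the time-$\tau$ flow of $\iu\,\frac{\drm}{\drm\tau}\ubf = \ubf\ast\ubf\ast\overline{\ubf}$. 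The quickest route is to recognize this as a Picard/Taylor series: differentiating the series in~\eqref{eq-Phbf-B} term by term should reproduce $-\iu\,\ubf\ast\ubf\ast\overline{\ubf}$ evaluated along the flow, matching the cubic right-hand side. This is the same structure as the exact flow~\eqref{eq-flowB-fourier} of $\phi_B^\nt$ in physical space, only now at the level of the modulation sequences, so the computation is essentially the one already carried out to pass from~\eqref{eq-flows} to~\eqref{eq-flowB-fourier}.

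Next I would establish that $\Ec(\vbf)=\sum_{j\in\disc}\sum_{k\in\Z}(k+1)\abs{v_j^k}^2$ is conserved along both flows. For the $\Abf$-flow this is trivial: since $\Phbf_\Abf^\tau$ only multiplies each coefficient by the unimodular factor $\e^{-\iu j^2\tau}$, each $\abs{v_j^k}^2$ is individually preserved, hence so is $\Ec$. The substance is in the $\Bbf$-flow. The natural strategy is to differentiate $\Ec$ along the flow and show $\frac{\drm}{\drm\tau}\Ec(\ubf(\tau))=0$. Writing $\frac{\drm}{\drm\tau}\abs{u_j^k}^2 = 2\,\ReT\bigl(\overline{u_j^k}\,\frac{\drm}{\drm\tau}u_j^k\bigr)$ and using $\frac{\drm}{\drm\tau}u_j^k = -\iu\,(\ubf\ast\ubf\ast\overline{\ubf})_j^k$, I would get
\[
\frac{\drm}{\drm\tau}\Ec(\ubf) = 2\,\ReT\Bigl(-\iu\sum_{j,k}(k+1)\,\overline{u_j^k}\,(\ubf\ast\ubf\ast\overline{\ubf})_j^k\Bigr).
\]
Expanding the convolution via~\eqref{eq-conv} turns the inner sum into a sum over quadruples of indices $(j_1,k_1),\dots,(j_4,k_4)$ subject to the convolution constraints $j_1+j_2\equiv j+j_3$ (mod $2K$) and $k_1+k_2 = k+k_3$, together with the index from $\overline{u_j^k}$. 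I expect this expression to split into a symmetric piece that cancels against its own complex conjugate (giving the $L^2$-type invariance corresponding to the constant $1$ in the weight $k+1$) and a piece weighted by the $k$'s that cancels because the linear constraint $k = k_1+k_2-k_3$ makes the weighted sum antisymmetric under an involution swapping the "incoming" and "outgoing" frequency labels. This is precisely where the particular weight $(k+1)$ is forced: the $+1$ part is conserved because the nonlinearity is gauge-covariant (equal number of factors and conjugated factors), and the $k$ part is conserved because $k$ is additive and matched by the frequency constraint.

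The main obstacle I anticipate is organizing this index bookkeeping cleanly so that the cancellation is manifest rather than miraculous. Concretely, after expanding, the sum should be reindexed so that the four indices appear symmetrically, at which point relabeling $(j,k)\leftrightarrow(j_3,k_3)$ (the conjugated slots) and invoking $\ReT(-\iu\,w)=\ImT(w)$ exhibits the total as the imaginary part of a manifestly real quantity, hence zero; the linear $k$-weight survives this relabeling as $k - k_3 + (k_1+k_2 - k - k_3)=0$ under the constraint, killing it as well. An alternative, perhaps cleaner, route that avoids the explicit index juggling is to recognize $\Bbf(\ubf)=\ubf\ast\ubf\ast\overline{\ubf}$ as the gradient (with respect to a suitable real inner product) of a real quartic Hamiltonian and $\Ec$ as a quadratic Casimir-type invariant commuting with it; but since the paper works coefficientwise throughout, I would present the direct differentiation and cancellation as the primary argument and note the symmetry as the reason the weight $(k+1)$ is the correct one.
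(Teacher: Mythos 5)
Your plan coincides with the paper's own proof: the paper verifies the flow property of $\Phbf_\Bbf^\tau$ exactly as you describe, by differentiating the series \eqref{eq-Phbf-B} term by term and matching it against $\Phbf_\Bbf^\tau(\vbf)\ast\Phbf_\Bbf^\tau(\vbf)\ast\overline{\Phbf_\Bbf^\tau(\vbf)}$ (the match resting on the multinomial identity $\sum_{m_1+m_2+m_3=m}(-1)^{m_3}m!/(m_1!\,m_2!\,m_3!)=(1+1-1)^m=1$, the one computation your outline leaves implicit), and it proves conservation of $\Ec$ along the $\Bbf$-flow precisely by your symmetrization argument, rewriting the derivative as a sum over quadruples on which the weight $(k_1+1)+(k_2+1)-(k_3+1)-(k_4+1)$ vanishes identically under the constraint $k_1+k_2-k_3-k_4=0$. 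Your approach is correct and essentially identical to the paper's.
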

\begin{proof}
  It is clear that~$\Phbf_\Abf^\tau$ is the flow of the differential equation with~$\Abf$ and that~$\Ec$ is conserved along its solutions.

  For $\Phbf_\Bbf^\tau$ as given by~\eqref{eq-Phbf-B} we first verify that it is the flow of the differential equation with~$\Bbf$. We compute
  \[
\iu \, \sfrac{\drm}{\drm\tau} \Phbf_\Bbf^\tau(\vbf) = \sum_{m=1}^\infty \frac{(-\iu \tau)^{m-1}}{(m-1)!} \; \underbrace{\vbf \ast \dotsm \ast \vbf}_{\text{$m+1$ times}} \ast \underbrace{\overline{\vbf} \ast \dotsm \ast \overline{\vbf}}_{\text{$m$ times}}
\]
and
\begin{multline*}
\Phbf_\Bbf^\tau(\vbf) \ast \Phbf_\Bbf^\tau(\vbf) \ast \overline{\Phbf_\Bbf^\tau(\vbf)}\\ = \sum_{m=0}^\infty \klabigg{\sum_{m_1+m_2+m_3=m} \frac{(-1)^{m_3}}{m_1!m_2!m_3!}} (-\iu \tau)^m \underbrace{\vbf \ast \dotsm \ast \vbf}_{\text{$m+2$ times}} \ast \underbrace{\overline{\vbf} \ast \dotsm \ast \overline{\vbf}}_{\text{$m+1$ times}}.
\end{multline*}
The statement follows then from the fact that
\[
  \sum_{m_1+m_2+m_3=m} \frac{(-1)^{m_3}m!}{m_1!m_2!m_3!}=(1+1-1)^m=1
\]
by the multinomial theorem.

To show that~$\Ec$ is conserved along solutions $\ubf(\tau)=\Phbf_\Bbf^\tau(\vbf)$ of the differential equation $\iu \, \frac{\drm}{\drm\tau} \ubf = \Bbf(\ubf)$ (with initial value $\ubf(0)=\vbf$) we compute
\begin{align*}
  - \iu \, \sfrac{\drm}{\drm\tau} \Ec\klabig{\ubf(\tau)} 
  & = \sum_{j\in\disc, k\in\Z} (k+1) \klaBig{ u_{j}^{k} \; \overline{ \iu \sfrac{\drm}{\drm\tau} u_j^k} - \overline{u_{j}^{k}} \; \iu \sfrac{\drm}{\drm\tau} u_j^k}\\
  & = \sum_{j\in\disc, k\in\Z} (k+1) u_{j}^{k} \sum_{\substack{j_2-j_3-j_4\equiv -j\\ k_2-k_3-k_4=-k}} u_{j_2}^{k_2} \overline{u_{j_3}^{k_3}} \overline{u_{j_4}^{k_4}}\\
    &\qquad + \sum_{j\in\disc, k\in\Z} (-k-1) \overline{u_{j}^{k}} \sum_{\substack{j_1+j_2-j_3\equiv j\\ k_1+k_2-k_3=k}} u_{j_1}^{k_1} u_{j_2}^{k_2} \overline{u_{j_3}^{k_3}}.
\end{align*}
By symmetry, the right-hand side is equal to
\begin{align*}
  \frac12 \sum_{\substack{j_1+j_2-j_3-j_4\equiv 0\\ k_1+k_2-k_3-k_4=0}} \klabig{k_1+1+k_2+1-k_3-1-k_4-1} u_{j_1}^{k_1} u_{j_2}^{k_2} \overline{u_{j_3}^{k_3}} \overline{u_{j_4}^{k_4}} = 0,
\end{align*}
i.e., $\Ec$ is also conserved along solutions of the differential equation with~$\Bbf$.
\end{proof}

The previous key lemma shows that the right-hand side of the system~\eqref{eq-modsystem} for the modulation functions is a composition of flows of differential equations, in the same way as the numerical method~\eqref{eq-method}, and that these differential equations have the same conserved quantity~$\Ec$. This is the main ingredient to show in the following lemma that
\begin{equation}\label{eq-Ec}
\Ec(t) := \Ec\klabig{\zbf(\ep t)} = \sum_{j\in\disc} \sum_{k\in\Z} (k+1) \absbig{z_j^k(\ep t)}^2,
\end{equation}
where~$\zbf$ are the modulation functions constructed in Section~\ref{sec-mfe} with Algorithm~\ref{algo-comp}, is an almost-invariant of the modulated Fourier expansion. 

\begin{mylemma}[$\Ec$ is an almost-invariant]\label{lemma-invariant}
We have
\[
\absbig{ \Ec(t_{n+1}) - \Ec(t_n) } \lesssim \ep^{N+2} h \myfor 0\le t_n=n h\le \ep^{-1}.
\]
\end{mylemma}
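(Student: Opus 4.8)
The plan is to exploit that the right-hand side of the modulation system~\eqref{eq-modsystem-defect} is itself a splitting method whose constituent flows all preserve~$\Ec$. Write $\Phi=\Phbf^{a_1h}_{\Abf}\circ\Phbf^{b_1h}_{\Bbf}\circ\dotsm\circ\Phbf^{a_sh}_{\Abf}\circ\Phbf^{b_sh}_{\Bbf}$ for the composition in~\eqref{eq-modsystem-defect}, and let $S$ be the phase shift $(S\vbf)_j^k=\e^{-\iu kh}v_j^k$. By Lemma~\ref{lemma-flows} each factor of~$\Phi$ preserves~$\Ec$, hence so does~$\Phi$; and $S$ preserves~$\Ec$ trivially, since $\e^{-\iu kh}$ has modulus one and the weight $k+1$ depends only on~$k$. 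With $\tau=\ep t_n$, the defect relation~\eqref{eq-modsystem-defect} reads $S\zbf(\tau+\ep h)=\Phi\klabig{\zbf(\tau)}-\dbf(\tau)$.

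First I would turn the difference $\Ec(t_{n+1})-\Ec(t_n)=\Ec\klabig{\zbf(\tau+\ep h)}-\Ec\klabig{\zbf(\tau)}$ into a defect-only expression. Using $\Ec\klabig{\zbf(\tau+\ep h)}=\Ec\klabig{S\zbf(\tau+\ep h)}=\Ec\klabig{\Phi(\zbf(\tau))-\dbf(\tau)}$ together with $\Ec\klabig{\zbf(\tau)}=\Ec\klabig{\Phi(\zbf(\tau))}$, a short computation with the Hermitian form $\langle\ubf,\vbf\rangle_\Ec=\sum_{j,k}(k+1)\overline{u_j^k}v_j^k$ (for which $\Ec(\vbf)=\langle\vbf,\vbf\rangle_\Ec$), followed by re-expressing $\Phi(\zbf(\tau))=S\zbf(\tau+\ep h)+\dbf(\tau)$, yields
\[
\Ec(t_{n+1})-\Ec(t_n)=-2\,\ReT\sum_{j\in\disc}\sum_{k\in\Z}(k+1)\,\e^{\iu kh}\,\overline{z_j^k(\tau+\ep h)}\,d_j^k(\tau)-\Ec\klabig{\dbf(\tau)}.
\]
The point of writing the surviving factor as $\zbf(\tau+\ep h)$ rather than $\Phi(\zbf(\tau))$ is that $S$ (unlike the convolution inside $\Phi$) commutes with the rescaling $\Labf$, so the bounds of Lemma~\ref{lemma-size} apply to it directly; the unimodular phase $\e^{\iu kh}$ is harmless for absolute-value estimates.

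The decisive step — and the only place where the claimed factor~$h$ is really at stake — is the estimate of the linear-in-$\dbf$ term. Here I would split the weight as $(k+1)=(k-j^2)+(j^2+1)$. For the first piece I use $\abs{k-j^2}\le\skla{k-j^2}$ to produce $\Labf\zbf(\tau+\ep h)$, then pair it with $\dbf$ by Cauchy--Schwarz and~\eqref{eq-l1l2}, obtaining $\normv{\Labf\zbf(\tau+\ep h)}_1\normv{\dbf(\tau)}_1\lesssim\ep\cdot\ep^{N+1}h$ via Lemmas~\ref{lemma-size} and~\ref{lemma-defect}. For the second piece I use the exact identity $j^2+1=\skla{j}^2$, absorb this spatial weight into the norm~$\normv{\cdot}_1$ of~\eqref{eq-normv}, and again pair $\zbf(\tau+\ep h)$ with $\dbf$ to get $\normv{\zbf(\tau+\ep h)}_1\normv{\dbf(\tau)}_1\lesssim\ep\cdot\ep^{N+1}h$. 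Both contributions are thus $\lesssim\ep^{N+2}h$. This splitting is exactly what lets the $h$-gaining bound $\normv{\dbf}_1\lesssim\ep^{N+1}h$ be used throughout: a crude bound $\abs{k+1}\lesssim\skla{k}\lesssim h^{-1}$ combined with $\normv{\Kbf\dbf}_1\lesssim\ep^{N+1}$ would only give~$\ep^{N+2}$ and lose the decisive factor~$h$. I expect this weight-splitting to be the one genuine obstacle; everything else is bookkeeping with the established norm bounds.

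Finally the quadratic remainder $\Ec\klabig{\dbf(\tau)}=\sum_{j,k}(k+1)\abs{d_j^k}^2$ is absorbed directly: bounding $\abs{k+1}\lesssim\skla{k}$ and pairing $\dbf$ with $\Kbf\dbf$ gives $\normv{\dbf(\tau)}_0\normv{\Kbf\dbf(\tau)}_0\lesssim\ep^{N+1}h\cdot\ep^{N+1}=\ep^{2N+2}h$, which is $\lesssim\ep^{N+2}h$ since $N\ge2$. Collecting the two estimates yields $\absbig{\Ec(t_{n+1})-\Ec(t_n)}\lesssim\ep^{N+2}h$, as claimed.
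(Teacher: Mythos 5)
Your proposal is correct and follows essentially the same route as the paper's proof: you invoke Lemma~\ref{lemma-flows} to replace $\Ec(\zbf(\tau))$ by $\Ec(\Phbf^{a_1h}_{\Abf}\circ\dotsm\circ\Phbf^{b_sh}_{\Bbf}(\zbf(\tau)))$, insert the defect relation~\eqref{eq-modsystem-defect}, expand the square, and then estimate the cross term by putting the weight $\skla{k-j^2}\skla{j}$ on $\zbf$ and only $\skla{j}$ on $\dbf$ so that the $h$-gaining bound $\normv{\dbf}_1\lesssim\ep^{N+1}h$ of Lemma~\ref{lemma-defect} applies, exactly as the paper does via~\eqref{eq-sklak}. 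Your additive split $(k+1)=(k-j^2)+(j^2+1)$ versus the paper's multiplicative bound $k+1\lesssim\skla{k-j^2}\skla{j}^2$ is only a cosmetic difference in bookkeeping.
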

\begin{proof}
  By Lemma~\ref{lemma-flows}, we have
  \[
\Ec(t_n) = \Ec\klabig{\zbf(\ep t_n)} = \Ec\klabig{\Phbf_\Abf^{a_1h}\circ\Phbf_\Bbf^{b_1 h}\circ\dotsm\circ\Phbf_\Abf^{a_sh}\circ\Phbf_\Bbf^{b_s h}\klabig{\zbf(\ep t_n)}} .
  \]
  Since the modulation functions~$z_j^k$ constructed in Section~\ref{sec-mfe} satisfy~\eqref{eq-modsystem-defect}, we thus get 
  \[
\Ec(t_n) = \sum_{j\in\disc} \sum_{k\in\Z} (k+1) \absbig{\e^{-\iu k h} z_j^k\klabig{\ep(t_n+h)}+d_j^k(\ep t_n)}^2.
\]
Rewriting the right-hand side yields
\[
\Ec(t_n) = \Ec(t_{n+1}) + \sum_{j\in\disc} \sum_{k\in\Z} (k+1) \Bigl( \absbig{d_j^k(\ep t_n)}^2 + 2\ReT\klabig{\e^{\iu k h} \overline{z_j^k(\ep t_{n+1})} d_j^k(\ep t_n)} \Bigr).
\]
To estimate $\Ec(t_{n+1}) - \Ec(t_n)$, we use 
\begin{equation}\label{eq-sklak}
k+1 \lesssim \skla{k} \lesssim \skla{k-j^2} \skla{j^2} \le \skla{k-j^2} \skla{j}^2
\end{equation}
similarly as in the proof of Lemma~\ref{lemma-size}. 
Together with the Cauchy--Schwarz inequality this yields
\begin{align*}
 & \absbig{ \Ec(t_{n+1}) - \Ec(t_n) } \lesssim \klabigg{\sum_{j\in\disc} \sum_{k\in\Z} \skla{k}^2 \absbig{d_j^k(\ep t_{n})}^2}^{1/2} \klabigg{\sum_{j\in\disc} \sum_{k\in\Z} \absbig{d_j^k(\ep t_{n})}^2}^{1/2} \\
  &\qquad\qquad  + \klabigg{ \sum_{j\in\disc} \sum_{k\in\Z} \skla{k-j^2}^2 \skla{j}^2 \absbig{z_j^k(\ep t_{n+1})}^2}^{1/2} \klabigg{ \sum_{j\in\disc} \sum_{k\in\Z} \skla{j}^2 \absbig{d_j^k(\ep t_{n})}^2}^{1/2}.
\end{align*}
Using~\eqref{eq-l1l2}, we get
\[
\absbig{ \Ec(t_{n+1}) - \Ec(t_n) } \lesssim \normv{\Kbf\dbf(\ep t_n)}_0 \, \normv{\dbf(\ep t_n)}_0 + \normv{\Labf\zbf(\ep t_{n+1})}_1 \, \normv{\dbf(\ep t_n)}_1,
\]
and the statement of the lemma thus follows from the expansion~\eqref{eq-mfe-exp} and Lemmas~\ref{lemma-size} and~\ref{lemma-defect}.
\end{proof}

We finally recall from~\cite{Gauckler2017a} that the almost-invariant~$\Ec$ of~\eqref{eq-Ec} is close to the squared $H^1(\mathbb{T})$-norm of the numerical solution.

\begin{mylemma}[$\Ec$ is closed to the squared $H^1(\mathbb{T})$-norm]\label{lemma-Ec-norm}
  We have
  \[
\absbig{\Ec(t_n) - \norm{\psi^n}_1^2} \lesssim \ep^3 \myfor 0\le t_n=n h\le \ep^{-1}.
  \]
\end{mylemma}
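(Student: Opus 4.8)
The plan is to route the comparison through the modulated Fourier approximation $\widetilde\psi^n$ and to exploit that on the diagonal indices $k=j^2$ the weight $k+1=j^2+1=\skla{j}^2$ appearing in the almost-invariant~\eqref{eq-Ec} coincides exactly with the weight $\skla{j}^2$ of the squared $H^1(\mathbb{T})$-norm~\eqref{eq-norm}. First I would replace $\norm{\psi^n}_1^2$ by $\norm{\widetilde\psi^n}_1^2$. Writing $\absbig{\norm{\psi^n}_1^2-\norm{\widetilde\psi^n}_1^2}\le \norm{\psi^n-\widetilde\psi^n}_1\klabig{\norm{\psi^n}_1+\norm{\widetilde\psi^n}_1}$ and invoking Lemma~\ref{lemma-error-psitilde} for the first factor and Lemma~\ref{lemma-size-psitilde} for the second, this difference is $\lesssim\ep^N\cdot\ep=\ep^{N+1}\le\ep^3$, because $N\ge 2$.

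It then remains to compare $\norm{\widetilde\psi^n}_1^2$ with $\Ec(t_n)$. Since the $j$th Fourier coefficient of $\widetilde\psi^n$ is $\sum_{k}z_j^k(\ep t_n)\e^{-\iu k t_n}$, we have
\[
\norm{\widetilde\psi^n}_1^2=\sum_{j\in\disc}\skla{j}^2\absbigg{\sum_{k\in\Z} z_j^k(\ep t_n)\,\e^{-\iu k t_n}}^2.
\]
I would split each inner sum into its diagonal term $z_j^{j^2}\e^{-\iu j^2 t_n}$ and the remainder $R_j=\sum_{k\ne j^2}z_j^k\e^{-\iu k t_n}$. The diagonal square then contributes $\sum_j\skla{j}^2\abs{z_j^{j^2}}^2$, which equals the diagonal part $\sum_j(j^2+1)\abs{z_j^{j^2}}^2$ of $\Ec(t_n)$. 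Consequently $\norm{\widetilde\psi^n}_1^2-\Ec(t_n)$ collapses to three genuinely off-diagonal remainders: the cross term $2\sum_j\skla{j}^2\ReT\klabig{z_j^{j^2}\e^{-\iu j^2 t_n}\overline{R_j}}$, the term $\sum_j\skla{j}^2\abs{R_j}^2$, and $-\sum_j\sum_{k\ne j^2}(k+1)\abs{z_j^k}^2$ coming from the off-diagonal part of $\Ec$.

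Here the decisive input is~\eqref{eq-offdiag-zero}: the off-diagonal coefficients vanish for $p=1,2$, so the off-diagonal modulation functions start at order $\ep^3$. Introducing the off-diagonal sequence $\zbf^{\mathrm{off}}=(z_j^k)_{k\ne j^2}$, Lemma~\ref{lemma-size} combined with~\eqref{eq-offdiag-zero} yields $\normv{\zbf^{\mathrm{off}}}_1\lesssim\ep^3$ and even $\normv{\Labf\zbf^{\mathrm{off}}}_1\lesssim\ep^3$, whereas the full sequence obeys $\normv{\zbf}_1\lesssim\ep$. Using $\abs{R_j}\le\sum_{k\ne j^2}\abs{z_j^k}$ together with~\eqref{eq-l1l2}, the term $\sum_j\skla{j}^2\abs{R_j}^2$ is bounded by $\normv{\zbf^{\mathrm{off}}}_1^2\lesssim\ep^6$; the cross term is bounded by the Cauchy--Schwarz inequality by $\normv{\zbf}_1\normv{\zbf^{\mathrm{off}}}_1\lesssim\ep^4$; and in the last sum I would absorb the large weight via $\abs{k+1}\lesssim\skla{k}\lesssim\skla{k-j^2}\skla{j}^2$, as in~\eqref{eq-sklak}, to bound it by $\normv{\Labf\zbf^{\mathrm{off}}}_1^2\lesssim\ep^6$. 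Altogether $\absbig{\norm{\widetilde\psi^n}_1^2-\Ec(t_n)}\lesssim\ep^4$, which together with the first step gives the claimed $\ep^3$.

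I expect the main obstacle to be the last off-diagonal sum $\sum_{k\ne j^2}(k+1)\abs{z_j^k}^2$, where the weight $k+1$ can be as large as $\sim K^2$ and must be controlled uniformly in $h$ and $K$. The resolution is that a large $\abs{k}$ forces $\abs{k-j^2}$ to be large as well, so the factor $\skla{k-j^2}$ hidden in the $\Labf$-weighted bound of Lemma~\ref{lemma-size} is precisely what absorbs it. Once the diagonal weights are recognized to match, everything else is a bookkeeping of powers of $\ep$.
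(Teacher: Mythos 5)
Your proposal is correct and follows essentially the same route as the paper: both proofs isolate the diagonal terms $k=j^2$ (whose weight $j^2+1=\skla{j}^2$ matches the $H^1$-norm), control the off-diagonal contributions via~\eqref{eq-offdiag-zero} and the weight inequality $\skla{k}\lesssim\skla{k-j^2}\skla{j}^2$ from Lemma~\ref{lemma-size}, and use Lemmas~\ref{lemma-size-psitilde} and~\ref{lemma-error-psitilde} to pass from the modulated Fourier expansion to~$\psi^n$. The only cosmetic difference is that the paper routes the comparison through the diagonal-only function $\eta^n$ and the inequality $\absbig{\norm{a}_1^2-\norm{b}_1^2}\le\norm{a-b}_1(\norm{a}_1+\norm{b}_1)$, whereas you expand $\norm{\widetilde\psi^n}_1^2$ explicitly into diagonal, cross and off-diagonal terms; the estimates and powers of~$\ep$ are identical.
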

\begin{proof}
  The proof is essentially the same as the one of \cite[Lemma~6.3]{Gauckler2017a}. On the one hand, we use that
  \[
\absbigg{ \Ec(t_n) - \sum_{j\in\disc} (j^2+1) \absbig{z_j^{j^2}(\ep t_n)}^2 } \le \sum_{j\in\disc} \sum_{k\in\Z : k\ne j^2} (\abs{k}+1) \absbig{z_j^k(\ep t_n)}^2 \lesssim \ep^6.
\]
This follows from~\eqref{eq-l1l2}, the expansion~\eqref{eq-mfe-exp}, the estimate $\normv{\Kbf^{1/2} \zhat_p(\ep t_n)}_0 \lesssim 1$ of Lemma~\ref{lemma-size} and the property $z_{j,1}^k=z_{j,2}^k=0$ for $k\ne j^2$ (see~\eqref{eq-offdiag-zero}).

On the other hand, we write $\eta^n(x) = \sum_{j\in\disc} z_j^{j^2}(\ep t_n) \e^{-\iu j^2 t_n} \e^{\iu j x}$ and use that
\begin{align*}
\absbigg{ \sum_{j\in\disc} (j^2+1) \absbig{z_j^{j^2}(\ep t_n)}^2 - \norm{\psi^n}_1^2} &= \absbig{ \norm{\eta^n}_1^2 - \norm{\psi^n}_1^2}\\ &\le \norm{\eta^n-\psi^n}_1 \klabig{\norm{\eta^n}_1 + \norm{\psi^n}_1} \lesssim \ep^{N+1} + \ep^4. 
\end{align*}
The last inequality follows from $\norm{\eta^n}_1\lesssim \ep$ by~\eqref{eq-mfe-exp} and Lemma~\ref{lemma-size}, from $\norm{\psi^n}_1 \le 2\ep$ by Lemma~\ref{lemma-size-psitilde} and from
\[
\norm{\eta^n-\psi^n}_1 \le \norm{\widetilde\psi^n-\psi^n}_1 + \normbigg{\sum_{j\in\disc} \sum_{k\ne j^2} z_j^k(\ep t_n) \e^{-\iu k t_n} \e^{\iu j x}}_1 \lesssim \ep^{N} + \ep^3 
\]
by~\eqref{eq-l1l2}, \eqref{eq-mfe}, \eqref{eq-mfe-approx}, \eqref{eq-mfe-exp}, \eqref{eq-offdiag-zero} and Lemmas~\ref{lemma-size} and~\ref{lemma-error-psitilde}.
\end{proof}

\section{Long time intervals}\label{sec-long}

In Section~\ref{sec-mfe} and~\ref{sec-estimates}, we have constructed and estimated a modulated Fourier expansion of the numerical solution. In Section~\ref{sec-invariants}, we have shown that this expansion has an almost-invariant that is close to the squared $H^1$-norm of the numerical solution. This implies near-conservation of the squared $H^1$-norm along the numerical solution, as claimed in Theorem~\ref{thm-main}. However, this statement is only true on a not so long time interval $0\le t_n\le\ep^{-1}$ so far. The goal of this section is to extend it to a long time interval of the form $0\le t_n\le\ep^{-N}$. The principle idea is to put many of the not so long time intervals together.

Without loss of generality, we assume in the following that $\np = \ep^{-1} h^{-1}$ is a natural number, so that~$\psi^\np$ is the numerical solution at the endpoint $\ep^{-1}=t_\np$ of the not so long time interval.

\subsection{Modulated Fourier expansion on a new time interval}\label{subsec-mfe-2ndinterval}

We consider a second time interval of length~$\ep^{-1}$ and on it a second modulated Fourier expansion
\[
\sum_{j\in\disc} \sum_{k\in\Z} \hat z_j^k\kla{\ep t} \e^{-\iu kt} \e^{\iu j x} \myfor \ep^{-1}\le t\le 2\ep^{-1}
\]
with modulation functions $\hat\zbf=(\hat{z}_j^k)_{j\in\disc, k\in\Z}$. The construction of the modulation functions is done in the same way as in Section~\ref{subsec-constr}, but with $\psi^{\np}$ as initial value at time~$t_\np=\ep^{-1}$ (instead of~$\psi^0$ at time~$0$). This means that the systems~\eqref{eq-modsystem}, \eqref{eq-modsystem-exp} and \eqref{eq-modsystem-exp-comput} remain unchanged (but now with hats on the variables), and that we replace~\eqref{eq-modsystem-init} by
\[
  \sum_{k\in\mathbb{Z}} \hat z_j^k(1) \e^{-\iu k t_\np} = \psi_j^\np, 
\]
and hence~\eqref{eq-modsystem-exp-init} by
\begin{equation}\label{eq-modsystem-exp-init-2}
\sum_{k\in\mathbb{Z}} \hat{z}_{j,p}^k(1) \e^{-\iu k t_\np} = \begin{cases} \ep^{-1} \psi_j^\np, & p=1,\\ 0, & p\ge 2. \end{cases} 
\end{equation}

We cannot expect the analog $\norm{\psi^\np}_1\le \ep$ of~\eqref{eq-init} to hold for the new initial value, but, as we will see, we can expect
\begin{equation}\label{eq-init-np}
\normbig{\psi^\np}_1 \le 2\ep.
\end{equation}
Under this condition, all estimates of Section~\ref{sec-estimates} for the modulation functions~$\zbf$, the modulation coefficient functions~$\zhat$ and the defect~$\dbf$ (for $0\le t\le\ep^{-1}$) carry over to~$\hat\zbf$, $\hat\zhat$ and~$\hat\dbf$ (for $\ep^{-1}\le t\le2\ep^{-1}$), with possibly different constants.

What we still need in order to patch the intervals $0\le t\le\ep^{-1}$ and $\ep^{-1}\le t\le2\ep^{-1}$ together is a control on the difference $z_j^k(\ep t_\np) - \hat z_j^k(\ep t_\np)$ at the interface $t_\np=\ep^{-1}$ of the two intervals. To estimate this difference, we first derive a Lipschitz-type estimate for the nonlinearity~$\Fhat$ in~\eqref{eq-modsystem-exp-comput} given by~\eqref{eq-F}. We recall that the seminorm~$\abs{\cdot}_{1,q}$ is defined in~\eqref{eq-seminorm}, the norm~$\normv{\cdot}_1$ in~\eqref{eq-normv}, and that $(\Fhat(\vhat))_q=((\Fhat(\vhat))_{j,q}^k)_{j\in\disc, k\in\Z}$.

\begin{mylemma}\label{lemma-F-lipschitz}
  Let $\vhat=\vhat(\tau)\in\mathbb{C}^{\disc\times\Z\times\mathbb{N}}$ and $\hat\vhat=\hat\vhat(\tau)\in\mathbb{C}^{\disc\times\Z\times\mathbb{N}}$ depend on time~$\tau$ with
  \[
    \absBig{\sfrac{\drm^\ell}{\drm\tau^\ell}\vhat(\tau)}_{1,p-2} + \absBig{\sfrac{\drm^\ell}{\drm\tau^\ell}\hat\vhat(\tau)}_{1,p-2}\le M \myfor \ell=0,1,\dots,L.
  \]
  Then, we have
  \begin{align*}
    &\normvbigg{\sum_{q=1}^p \ep^q \sfrac{\drm^\ell}{\drm\tau^\ell} \klaBig{ \klabig{\Fhat(\vhat(\tau))}_q - \klabig{\Fhat(\hat\vhat(\tau))}_q } }_1\\
    &\qquad \le C h \sum_{p'=2}^{p-1} \ep^{p'} \max_{\ell'=0,\dots,\ell} \normvbigg{ \sum_{q=1}^{p-p'} \ep^q \sfrac{\drm^{\ell'}}{\drm\tau^{\ell'}} \klabig{\vhat_q(\tau)-\hat \vhat_q(\tau)}}_1 \qquad\text{for}\quad \ell=0,1,\dots,L
  \end{align*}
  with~$C$ depending only on~$p$, the bound~$M$, the number of derivatives~$L$, the number of stages~$s$ and an upper bound on the absolute values of the coefficients $b_1,\dots,b_s$ of the method~\eqref{eq-method}.
\end{mylemma}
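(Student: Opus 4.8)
The plan is to follow the proof of Lemma~\ref{lemma-F-bounds}, but to track how a \emph{difference} of two arguments propagates through the operator~$\Fhat$ of~\eqref{eq-F} instead of merely tracking its size. Since $\Fhat$ is a finite sum over $r=1,\dots,s$ of compositions built from three blocks---the linear $\Phihat_{\Ahat}^{\nt}$, the convolution polynomial $\Phihat_{\Bhat}^{\nt}$, and the distinguished factor $\Phihat_{\Bhat}^{b_r h}-\idhat$---I would first record a Lipschitz estimate for each block in the truncated weighted norms $\normv{\sum_{q=1}^{P}\ep^q(\,\cdot\,)}_1$ and then chain these estimates through the composition, summing over~$r$ at the end.

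First I would treat the individual blocks. The factor $\Phihat_{\Ahat}^{\nt}$ is diagonal in the index~$p$ and an isometry, so it transports differences with constant~$1$ and without enlarging the truncation level~$P$. For $\Phihat_{\Bhat}^{\nt}$ I would expand $\Phihat_{\Bhat}^{\nt}(\vhat)-\Phihat_{\Bhat}^{\nt}(\hat\vhat)$ by the discrete product rule, writing each difference of $(2m+1)$-fold convolutions as a telescoping sum in which exactly one factor is a difference $\vind_{p_i}-\hat\vind_{p_i}$ (or its conjugate) and the remaining~$2m$ factors are $\vind$ or $\hat\vind$; bounding those factors by~$M$ through the algebra property~\eqref{eq-algebra} and summing against $\tfrac{\abs{\nt}^m}{m!}$ produces a Lipschitz constant of the form $\e^{CM^2\abs{\nt}}$, exactly as in part~(a) of the proof of Lemma~\ref{lemma-F-bounds}. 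The essential point is the distinguished block $\Phihat_{\Bhat}^{b_r h}-\idhat$: here the series starts at $m=1$, so every term carries a factor $(-\iu b_r h)^m$, hence at least one power of~$h$, and involves at least $2m\ge 2$ factors of index~$\ge 1$ besides the difference factor. This is precisely what yields the gain $h\sum_{p'\ge 2}\ep^{p'}$ together with the lowered budget $p-p'$ appearing on the right-hand side.

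Next I would chain the block estimates through each composition $T_r=\Phihat_{\Ahat}^{(a_1+\dots+a_r)h}\circ(\Phihat_{\Bhat}^{b_r h}-\idhat)\circ(\text{tail})$ in~\eqref{eq-F}. With $\what=(\text{tail})(\vhat)$ and $\hat\what=(\text{tail})(\hat\vhat)$, the outer linear factor is a transparent isometry; the distinguished factor contributes the factor~$h$ and the index shift to the reduced budget $p-p'$ with $p'\ge 2$; and the tail, being a composition of index-preserving isometries $\Phihat_{\Ahat}$ and Lipschitz maps $\Phihat_{\Bhat}$, transports $\what-\hat\what$ back to $\vhat-\hat\vhat$ at the \emph{same} truncation level $p-p'$ (because each tail block's output at indices $\le p-p'$ depends only on inputs at indices $\le p-p'$), with a constant depending only on~$M$, $s$ and the~$b_r$. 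Summing over the $s$ terms and absorbing all block constants into a single~$C$ (now also depending on~$p$ through the partition sums) gives the claimed bound for $\ell=0$.

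For $\ell\ge 1$ I would differentiate the multilinear expansions by the Leibniz rule, distributing the $\ell$ derivatives over the factors; the combinatorial cost $(2m+1)^\ell\le 3^{m\ell}$ is absorbed into the convergent $m$-series exactly as in part~(b) of the proof of Lemma~\ref{lemma-F-bounds}, the derivatives falling on non-difference factors are controlled by the hypothesis $\absBig{\frac{\drm^{\ell'}}{\drm\tau^{\ell'}}\vhat}_{1,p-2},\absBig{\frac{\drm^{\ell'}}{\drm\tau^{\ell'}}\hat\vhat}_{1,p-2}\le M$ for $\ell'\le L$, and the derivative landing on the single difference factor is what generates the $\max_{\ell'=0,\dots,\ell}$ on the right-hand side. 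I expect the main obstacle to be the bookkeeping of the second step: one has to check that isolating exactly one difference factor, together with the mandatory index drop of the $\Phihat_{\Bhat}^{b_r h}-\idhat$ block, produces precisely the truncated budget $\sum_{q=1}^{p-p'}$ with $p'\ge 2$ (rather than a coarser one), while keeping the $m$-series convergent via~\eqref{eq-algebra}. The interplay between the index filtration, used for the shift, and the multilinear telescoping, used to isolate the difference, is the delicate part, whereas the power of~$h$ and the derivative estimates follow the template already established in Lemma~\ref{lemma-F-bounds}.
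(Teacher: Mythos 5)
Your proposal is correct and follows essentially the same route as the paper's proof: block-wise Lipschitz estimates for $\Phihat^{\nt}_{\Ahat}$ (isometry), $\Phihat^{\nt}_{\Bhat}$ (telescoping to isolate a single difference factor, then the algebra property~\eqref{eq-algebra}), and $\Phihat^{\nt}_{\Bhat}-\idhat$ (series starting at $m=1$, which is exactly where the factor~$h$ and the restriction $p'\ge 2$ with reduced budget $p-p'$ originate), chained through the composition~\eqref{eq-F} and summed over~$r$. The treatment of derivatives via the Leibniz rule and $(2m+1)^\ell\le 3^{m\ell}$, deferring to part~(b) of the proof of Lemma~\ref{lemma-F-bounds}, is also the paper's argument.
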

\begin{proof}
  The proof is similar to the one of Lemma~\ref{lemma-F-bounds}, where bounds on~$\Fhat$ have been derived.
We omit the argument~$\tau$ in the following and restrict to the case $L=0$. The extension to larger~$L$ is done as described in part~(b) of the proof of Lemma~\ref{lemma-F-bounds}. 

As~$\Fhat$ of~\eqref{eq-F} is given by compositions of~$\Phihat^{\nt}_{\Ahat}$, $\Phihat^{\nt}_{\Bhat}$ and~$\Phihat^{\nt}_{\Bhat}-\idhat$, we have to prove Lipschitz-type estimates for these operators. For~$\Phihat^{\nt}_{\Ahat}$ we simply have
\[
\normvbigg{\sum_{q=1}^p \ep^q \klaBig{ \klabig{\Phihat^{\nt}_{\Ahat}(\vhat)}_q - \klabig{\Phihat^{\nt}_{\Ahat}(\hat\vhat) }_q } }_1 = \normvbigg{\sum_{q=1}^p \ep^q \kla{ \vhat_q - \hat\vhat_q } }_1.
\]
For~$\Phihat^{\nt}_{\Bhat}$ we have to work harder.
We start from
\begin{align*}
  &\sum_{q=1}^p \ep^q \klaBig{ \klabig{\Phihat^{\nt}_{\Bhat}(\vhat)}_q - \klabig{\Phihat^{\nt}_{\Bhat}(\hat\vhat) }_q} = \sum_{m=0}^\infty \frac{(\iu \nt)^m}{m!} \sum_{p_1+\dots+p_{2m+1}\le p} \ep^{p_1+\dots+p_{2m+1}}\\
  &\qquad \cdot \klaBig{ \kla{\vind_{p_1}-\hat\vind_{p_1}} \ast \vind_{p_2} \ast \dotsm \ast \overline{\vind_{p_{2m+1}}} + \hat\vind_{p_1} \ast \kla{\vind_{p_2}-\hat\vind_{p_2}} \ast \vind_{p_3} \ast \dotsm \ast \overline{\vind_{p_{2m+1}}}\\
    &\qquad\qquad\qquad\qquad\qquad\qquad\qquad + \dots + \hat\vind_{p_1} \ast \dotsm \ast \overline{\hat\vind_{p_{2m}}} \ast \klabig{\overline{\vind_{p_{2m+1}}}-\overline{\hat\vind_{p_{2m+1}}}} }.
\end{align*}
On the right-hand side of this equation we rewrite, for example, 
  \begin{align*}
    &\sum_{p_1+\dots+p_{2m+1}\le p} \ep^{p_1+\dots+p_{2m+1}} \kla{\vind_{p_1}-\hat\vind_{p_1}} \ast \vind_{p_2} \ast \dotsm \ast \overline{\vind_{p_{2m+1}}} \\
    &\qquad = \sum_{p'=2m}^{p-1} \ep^{p'} \klabigg{ \sum_{p_2+\dots+p_{2m+1}= p'}\vind_{p_2} \ast \dotsm \ast \overline{\vind_{p_{2m+1}}}} \ast \klabigg{ \sum_{p_{1}=1}^{p-p'} \ep^{p_{1}} \kla{\vind_{p_{1}}-\hat\vind_{p_{1}}}}.
  \end{align*}
  Using the property~\eqref{eq-algebra} of the discrete convolution then yields the following Lip\-schitz-type estimate for~$\Phihat^{\nt}_{\Bhat}$:
  \begin{align*}
    &\normvbigg{\sum_{q=1}^p \ep^q \klaBig{ \klabig{\Phihat^{\nt}_{\Bhat}(\vhat)}_q - \klabig{\Phihat^{\nt}_{\Bhat}(\hat\vhat) }_q}}_1\\
    &\qquad\qquad \le \sum_{m=0}^\infty \frac{\abs{\nt}^m}{m!} \, C_1^{2m} (2m+1) \sum_{p'=2m}^{p-1} \ep^{p'} \gamma_{p'-1}^{2m} \, \normvbigg{\sum_{p_{1}=1}^{p-p'} \ep^{p_{1}} \kla{\vhat_{p_{1}} - \hat\vhat_{p_{1}}}}_1 \\
    &\qquad\qquad \le  \sum_{p'=0}^{p-1} \ep^{p'} \e^{3 C_1^2 \abs{\nt} \, \gamma_{p'-1}^{2}} \, \normvbigg{\sum_{q=1}^{p-p'} \ep^{q} \kla{\vhat_{q}-\hat\vhat_{q}}}_1,
  \end{align*}
  where $\gamma_{q} = \max(\abs{\vhat}_{1,q},\abs{\hat\vhat}_{1,q})$ for $q\ge 1$, $\gamma_q=1$ for $q\le 0$, and where~$C_1$ is the constant from~\eqref{eq-algebra}.
For~$\Phihat^{\nt}_{\Bhat}-\idhat$ we proceed in the same way but use that the sum over~$m$ starts with $m=1$. This yields
  \begin{align*}
    &\normvbigg{\sum_{q=1}^p \ep^q \klaBig{ \klabig{\klabig{\Phihat^{\nt}_{\Bhat}-\idhat}(\vhat)}_q - \klabig{\klabig{\Phihat^{\nt}_{\Bhat}-\idhat}(\hat\vhat) }_q}}_1\\
    &\qquad\qquad\qquad\qquad \le \sum_{p'=2}^{p-1} \ep^{p'} \klaBig{\e^{3 C_1^2 \abs{\nt} \, \gamma_{p'-1}^{2}} - 1} \normvbigg{\sum_{q=1}^{p-p'} \ep^{q} \kla{\vhat_{q}-\hat\vhat_{q}}}_1.
  \end{align*}
  Putting the estimates of the individual factor of~$\Fhat$ together and using Lemma~\ref{lemma-F-bounds} yields the claimed bound for $L=0$, and the bound for the case $L>0$ is obtained similarly.
\end{proof}

\begin{mylemma}\label{lemma-interface-z}
  In the situation of Sections~\ref{sec-mfe} and~\ref{sec-estimates} and under condition~\eqref{eq-init-np}, we have
  \[
\normvbig{\zbf(1) - \hat\zbf(1)}_1 \lesssim \ep^N.
  \]
\end{mylemma}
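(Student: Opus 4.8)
The plan is to exploit that the two families of modulation coefficient functions $z_{j,p}^k$ and $\hat z_{j,p}^k$ solve the \emph{same} system~\eqref{eq-modsystem-exp-comput}, so that their difference is driven only by the different normalisations~\eqref{eq-modsystem-exp-init} and~\eqref{eq-modsystem-exp-init-2} together with the fact that the two expansions describe the same numerical solution at the interface. The one external input is the matching in the \emph{physical} variable: since $\sum_{k\in\Z}z_j^k(1)\e^{-\iu kt_\np}=(\widetilde\psi^\np)_j$ while $\sum_{k\in\Z}\hat z_j^k(1)\e^{-\iu kt_\np}=\psi_j^\np$ by~\eqref{eq-modsystem-exp-init-2} (summed over~$p$), Lemma~\ref{lemma-error-psitilde} applied at the endpoint $t_\np=\ep^{-1}$ gives
\[
\klabigg{\sum_{j\in\disc}\skla{j}^2\absBig{\sum_{k\in\Z}\klabig{z_j^k(1)-\hat z_j^k(1)}\e^{-\iu kt_\np}}^2}^{1/2}=\norm{\widetilde\psi^\np-\psi^\np}_1\lesssim\ep^N.
\]

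Write $E(\tau)=\zbf(\tau)-\hat\zbf(\tau)=\klabig{z_j^k(\tau)-\hat z_j^k(\tau)}_{j,k}$ and split the interface value $E(1)$ into its resonant part $E^{\mathrm{res}}$ ($k=j^2$) and its non-resonant part $E^{\mathrm{nonres}}$ ($k\ne j^2$). For the non-resonant part I would subtract the two copies of~\eqref{eq-modsystem-exp-comput}, multiply by~$\ep^p$ and sum over~$p$: the factor $\e^{-\iu kh}-\e^{-\iu j^2h}$ is bounded below by Lemma~\ref{lemma-cfl}, the resulting $\Fhat$-difference is controlled by Lemma~\ref{lemma-F-lipschitz} (which carries the prefactor $h\sum_{p'\ge2}\ep^{p'}$, hence a gain of~$\ep^2$), and the remaining derivative terms carry the slow-time increment~$\ep h$, hence a gain of~$\ep$. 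After dividing by the denominator this yields a bound of the schematic form
\[
\normvBig{\Labf E^{\mathrm{nonres}}}_1\lesssim\ep\max_{\ell\ge1}\normvBig{\Labf\tfrac{\drm^\ell}{\drm\tau^\ell}E(1)}_1+\ep^2\normv{E(1)}_1 ,
\]
and since the modulation functions are polynomials of degree at most $N-1$ by~\eqref{eq-degree}, the analogous bounds for the finitely many nonzero $\tau$-derivatives collapse into $\normv{E^{\mathrm{nonres}}}_1\lesssim\ep^2\normv{E(1)}_1+\ep\cdot(\text{derivatives})$. For the resonant part I would use the physical matching above to write $E_j^{j^2}(1)=\e^{\iu j^2t_\np}\klabig{(\widetilde\psi^\np-\psi^\np)_j-\sum_{k\ne j^2}E_j^k(1)\e^{-\iu kt_\np}}$, so that $\normv{E^{\mathrm{res}}}_1\lesssim\ep^N+\normv{E^{\mathrm{nonres}}}_1$; the $\tau$-derivatives of the resonant part are instead controlled directly from~\eqref{eq-modsystem-exp-comput} at level $p+1$ (as in step~(c) of Algorithm~\ref{algo-comp}), and are $\lesssim\ep^2\normv{E(1)}_1$ by Lemma~\ref{lemma-F-lipschitz}.

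Combining the two parts, and folding the bounded number of $\tau$-derivatives into a single quantity, produces a self-referential estimate of the form $\normv{E(1)}_1\lesssim\ep^N+\ep\,\normv{E(1)}_1$, which for $\ep\le\ep_0$ small absorbs the last term and gives the claimed bound $\normv{\zbf(1)-\hat\zbf(1)}_1=\normv{E(1)}_1\lesssim\ep^N$. This mirrors the inductive argument used for Lemma~\ref{lemma-size} (and \cite[Lemma~5.3]{Gauckler2017a}), but with Lemma~\ref{lemma-F-lipschitz} in place of Lemma~\ref{lemma-F-bounds}.

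The hard part is the resonant component. Its interface difference does \emph{not} vanish order by order in~$\ep$ — already the partial sums $\sum_{q\le p}\ep^q\klabig{z_{j,q}^{j^2}(1)-\hat z_{j,q}^{j^2}(1)}$ are only of size $\ep^2$, because the nonlinear phase drift of the resonant mode over one slow-time unit is encoded at \emph{different} $\ep$-orders by the two expansions (anchored at $\tau=0$ and at $\tau=1$ respectively). The required smallness is recovered only from the \emph{aggregated} physical matching, which forces one to estimate the full difference $E$ at once rather than its individual $\ep$-orders, i.e.\ to run the absorption argument above instead of a plain induction on~$p$. Keeping the self-coupling at a genuine positive power of~$\ep$ — which rests on Lemma~\ref{lemma-cfl} for the non-resonant denominators, on the $\ep^2$-gain in Lemma~\ref{lemma-F-lipschitz}, and on the $\ep h$ slow-time step — while simultaneously closing the finitely many $\tau$-derivatives is the delicate bookkeeping of the proof.
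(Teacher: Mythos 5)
Your overall strategy --- recover the resonant part of the difference from the physical matching at the interface, the non-resonant part from the subtracted equations~\eqref{eq-modsystem-exp-comput} via Lemmas~\ref{lemma-cfl} and~\ref{lemma-F-lipschitz}, and close with an absorption argument in~$\ep$ --- is workable in principle, and it is organized genuinely differently from the paper's proof. The paper introduces the $\ep$-weighted partial sums $\ghat_p = \sum_{q=1}^{p}\ep^q\klabig{\zhat_q - \hat\zhat_q}$ and proves $\normvBig{\Labf \sfrac{\drm^\ell}{\drm\tau^\ell}\ghat_p(1)}_{1} \lesssim \ep^p$ by induction on~$p$, exactly parallel to Lemma~\ref{lemma-size}; the interface input it needs at level~$p$ is $\normbig{\bigl[\widetilde\psi^\np\bigr]^p - \psi^\np}_1 \lesssim \ep^p$ for the expansion truncated after $p$ instead of $N$ terms, obtained by rerunning the proof of Lemma~\ref{lemma-error-psitilde} with~$N$ replaced by~$p$. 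This contradicts the central claim of your last paragraph: the partial sums $\sum_{q\le p}\ep^q\klabig{z_{j,q}^{j^2}(1)-\hat z_{j,q}^{j^2}(1)}$ are \emph{not} stuck at size~$\ep^2$; they are of size~$\ep^p$. Your intuition is correct for the individual weighted terms, which are indeed each of size~$\ep^2$ and do not decay with~$q$, but they cancel inside the partial sums, and the mechanism enforcing this cancellation is precisely the truncated approximation bound just quoted, which your proposal does not contemplate. So the ``plain induction on~$p$'' that you declare impossible is exactly the paper's proof; your absorption argument is an alternative to it, not a necessity.

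Moreover, as written your displayed inequalities do not follow from the lemmas you cite. Multiplying the subtracted equations by~$\ep^p$ and summing over~$p$ produces derivative terms $\sum_{\ell\ge1}\frac{\ep^\ell h^\ell}{\ell!}\sfrac{\drm^\ell}{\drm\tau^\ell}\ghat_{N-\ell}(1)$, and Lemma~\ref{lemma-F-lipschitz} controls the right-hand side in terms of $\ghat_{N-p'}(1)$ with $p'\ge 2$: these are partial sums, not the full difference $E=\ghat_N$ appearing in your estimates. Converting them into~$E$ costs tail terms $\sum_{q>N-\ell}\ep^q\klabig{\zhat_q-\hat\zhat_q}$, which are of size $\ep^{N-\ell+1}$ by Lemma~\ref{lemma-size} applied to both families, so each of your schematic inequalities acquires an inhomogeneous term of size~$\ep^N$ that is currently missing. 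Similarly, the claimed bound $\lesssim \ep^2\normv{E(1)}_1$ for the resonant derivatives is not what the level-shifted equations give: summing $\ep^q$ times the level-$(q+1)$ equation costs a factor~$\ep^{-1}$, so one obtains instead a bound of the form $\ep\,\normv{E(1)}_1 + \ep h \max_{\ell\ge2}\normvbig{\sfrac{\drm^\ell}{\drm\tau^\ell}E(1)}_1 + \ep^N$. None of these corrections destroys your scheme --- all self-couplings remain $O(\ep)$ and all inhomogeneities $O(\ep^N)$, so the absorption still closes for $\ep\le\ep_0$, and your route would then get by with Lemma~\ref{lemma-error-psitilde} at the single level~$N$, whereas the paper must rerun it for every truncation level. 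But the bookkeeping you yourself call delicate is exactly where the proposal, as it stands, falls short of a proof.
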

\begin{proof}
  We introduce $\ghat=(g_{j,p}^k)_{j\in\disc,k\in\Z,p\in\N}$ by using the notation~\eqref{eq-zhatp} as 
  \[
\ghat_p = \sum_{q=1}^p \ep^q \klabig {\zhat_q - \hat\zhat_q} \myfor p=1,\dots,N
\]
and set $\ghat_p=0$ for $p>N$.
Note that $\ghat_N(1) = \zbf(1) - \hat\zbf(1)$ by~\eqref{eq-mfe-exp}.

(a) The following equations hold for $\ghat_p$ with $p\le N$. From~\eqref{eq-modsystem-exp-comput} we get the equation
\[
  \klabig{\e^{-\iu k h} - \e^{-\iu j^2 h}} g_{j,p}^k(1) + \e^{-\iu k h} \sum_{\ell=1}^{p-1} \, \frac{\ep^\ell h^{\ell}}{\ell!} \frac{\drm^\ell}{\drm\tau^\ell} g_{j,p-\ell}^k(1)  = \sum_{q=1}^p \ep^q \klabig{\Fhat\kla{\zhat(1)} - \Fhat\kla{\hat\zhat(1)}}_{j,q}^k
\]
for~$\ghat$, and from~\eqref{eq-modsystem-exp-init-2} we get
\[
\sum_{k\in\mathbb{Z}} g_{j,p}^k(1) \e^{-\iu k t_\np} = \bigl[\widetilde\psi_j^\np \bigr]^p - \psi_j^\np,
\]
where
\[
\bigl[\widetilde\psi_j^\np\bigr]^p = \sum_{k\in\Z} \sum_{q=1}^p \ep^q z_{j,q}^k(1) \e^{-\iu k t_\np}
\]
is the modulated Fourier expansion as in~\eqref{eq-mfe}, \eqref{eq-mfe-approx} and~\eqref{eq-mfe-exp} but truncated after~$p$ instead of~$N$ terms.

(b) With the same arguments as in the proof of Lemma~\ref{lemma-error-psitilde} on the error $\widetilde\psi^n-\psi^n$ we get (for $n=\np$)
\begin{equation}\label{eq-approx-q}
  \normbig{ \bigl[\widetilde\psi^\np \bigr]^p - \psi^\np}_1 \lesssim \ep^p,
\end{equation}
if we replace the truncation index~$N$ by~$p$ in the proof of Lemma~\ref{lemma-error-psitilde}.

(c) In the same way as in the proof of Lemma~\ref{lemma-size} on the size of~$\zhat_p$, we can now show by induction on $p=1,\dots,N$ that
\[
  \normvBig{\Labf \sfrac{\drm^\ell}{\drm\tau^\ell} \ghat_p(1)}_{1} \lesssim \ep^p \myfor \ell\in\N_0,
\]
if we replace~\eqref{eq-init} by~\eqref{eq-approx-q} and the use of Lemma~\ref{lemma-F-bounds} by Lemma~\ref{lemma-F-lipschitz} in the proof of Lemma~\ref{lemma-size}. 
For $p=N$ and $\ell=0$, this yields the claimed estimate.
\end{proof}

\subsection{The almost-invariant on a new time interval}\label{subsec-invariant-next}

For the new modulated Fourier expansion on $\ep^{-1}\le t\le 2\ep^{-1}$ as considered in the previous Section~\ref{subsec-mfe-2ndinterval} (with modulation functions~$\hat z_j^k$ collected in~$\hat\zbf$), we get a new almost-invariant
\[
\hat \Ec(t) : = \Ec\klabig{\hat\zbf(\ep t)} = \sum_{j\in\disc} \sum_{k\in\Z} (k+1) \absbig{\hat z_j^k(\ep t)}^2
\]
by repeating the arguments of Section~\ref{sec-invariants}. The statements of Section~\ref{sec-invariants} transfer to this new almost-invariant under the assumption~\eqref{eq-init-np} on the size of~$\psi^\np$. In particular, $\hat\Ec$ is nearly preserved on $\ep^{-1}\le t\le 2\ep^{-1}$ and close to the squared $H^1(\mathbb{T})$-norm of the numerical solution. 

What we still need is a control on the difference $\Ec(\ep^{-1}) - \hat \Ec(\ep^{-1})$ at the interface of the two time intervals.

\begin{mylemma}\label{lemma-interface-E}
  In the situation of Sections~\ref{sec-mfe} and~\ref{sec-estimates} and under condition~\eqref{eq-init-np}, we have
  \[
\absbig{\Ec(\ep^{-1}) - \hat \Ec(\ep^{-1})} \lesssim \ep^{N+1}.
  \]
\end{mylemma}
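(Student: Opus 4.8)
The plan is to mimic the Cauchy--Schwarz argument from the proof of Lemma~\ref{lemma-invariant}, now exploiting the interface bound of Lemma~\ref{lemma-interface-z} on the difference $\zbf(1)-\hat\zbf(1)$ of the two families of modulation functions. Since $\Ec(\ep^{-1})=\Ec\klabig{\zbf(1)}$ and $\hat\Ec(\ep^{-1})=\Ec\klabig{\hat\zbf(1)}$ by the definitions of the two almost-invariants, I would first write
\[
\Ec(\ep^{-1}) - \hat\Ec(\ep^{-1}) = \sum_{j\in\disc}\sum_{k\in\Z} (k+1)\klabig{\absbig{z_j^k(1)}^2 - \absbig{\hat z_j^k(1)}^2}
\]
and then use the elementary identity $\abs{a}^2-\abs{b}^2 = \ReT\klabig{(a+b)\overline{(a-b)}}$ with $a=z_j^k(1)$ and $b=\hat z_j^k(1)$, so that each summand splits into a factor built from the sum $\zbf(1)+\hat\zbf(1)$ and a factor built from the small difference $\zbf(1)-\hat\zbf(1)$.

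Next I would insert the weight estimate $k+1 \lesssim \skla{k-j^2}\skla{j}^2$ from~\eqref{eq-sklak} and distribute the weights as $\skla{k-j^2}\skla{j}$ on the factor carrying $\zbf(1)+\hat\zbf(1)$ and $\skla{j}$ on the factor carrying $\zbf(1)-\hat\zbf(1)$. Applying the Cauchy--Schwarz inequality over $(j,k)$ then gives
\[
\absbig{\Ec(\ep^{-1}) - \hat\Ec(\ep^{-1})} \lesssim \klabigg{\sum_{j,k}\skla{k-j^2}^2\skla{j}^2\absbig{z_j^k(1)+\hat z_j^k(1)}^2}^{1/2} \klabigg{\sum_{j,k}\skla{j}^2\absbig{z_j^k(1)-\hat z_j^k(1)}^2}^{1/2},
\]
exactly in the pattern of Lemma~\ref{lemma-invariant}. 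By~\eqref{eq-l1l2} (with $\sigma=1$) the two sums are controlled by $\normv{\Labf(\zbf(1)+\hat\zbf(1))}_1^2$ and $\normv{\zbf(1)-\hat\zbf(1)}_1^2$, respectively, so that
\[
\absbig{\Ec(\ep^{-1}) - \hat\Ec(\ep^{-1})} \lesssim \normvbig{\Labf\klabig{\zbf(1)+\hat\zbf(1)}}_1 \cdot \normvbig{\zbf(1)-\hat\zbf(1)}_1.
\]

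Finally I would estimate the two remaining factors. For the first, the triangle inequality together with the expansion~\eqref{eq-mfe-exp} reduces $\normv{\Labf\zbf(1)}_1$ and $\normv{\Labf\hat\zbf(1)}_1$ to sums $\sum_{p=1}^N \ep^p \normv{\Labf\zind_p(1)}_1$, each term bounded by $\lesssim 1$ via Lemma~\ref{lemma-size}, whence $\normv{\Labf\zbf(1)}_1\lesssim\ep$ and likewise for $\hat\zbf$ (its estimates carry over under condition~\eqref{eq-init-np}, as noted in Section~\ref{subsec-mfe-2ndinterval}); thus $\normv{\Labf(\zbf(1)+\hat\zbf(1))}_1 \lesssim \ep$. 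For the second factor I would invoke Lemma~\ref{lemma-interface-z}, giving $\normv{\zbf(1)-\hat\zbf(1)}_1 \lesssim \ep^N$. Multiplying yields the claimed bound $\lesssim \ep^{N+1}$. No genuine obstacle remains at this stage, since all the difficulty has been front-loaded into Lemma~\ref{lemma-interface-z}; the only point requiring care here is the asymmetric placement of the weights, so that the $\Labf$-weighted factor lands on the $O(\ep)$-small \emph{sum} $\zbf(1)+\hat\zbf(1)$ while the plain $\normv{\cdot}_1$-norm lands on the $O(\ep^N)$-small \emph{difference}, which is precisely the balance that produces one extra power of~$\ep$ beyond $\ep^N$.
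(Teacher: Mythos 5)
Your proposal is correct and follows essentially the same route as the paper's proof: the paper likewise factors $\absbig{z_j^k}^2-\absbig{\hat z_j^k}^2$ into (difference)\,$\times$\,(sum), applies the weight estimate~\eqref{eq-sklak}, Cauchy--Schwarz and~\eqref{eq-l1l2} to obtain $\normv{\zbf-\hat\zbf}_1\klabig{\normv{\Labf\zbf}_1+\normv{\Labf\hat\zbf}_1}$, and then concludes with Lemmas~\ref{lemma-size} and~\ref{lemma-interface-z} together with the expansion~\eqref{eq-mfe-exp}. The only cosmetic difference is that you keep the $\Labf$-weight on the sum $\zbf(1)+\hat\zbf(1)$ rather than on $\zbf$ and $\hat\zbf$ separately, which is equivalent by the triangle inequality.
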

\begin{proof}
  Omitting the argument~$1$ of~$z_j^k$ and~$\hat z_j^k$ we have
  \[
\absbig{\Ec(\ep^{-1}) - \hat \Ec(\ep^{-1})} \le \sum_{j\in\disc} \sum_{k\in\Z} (\abs{k}+1) \absbig{z_j^k-\hat z_j^k} \cdot \klabig{\abs{z_j^k}+\abs{\hat z_j^k} }.
  \]
  With the estimate~\eqref{eq-sklak} of $\abs{k}+1$, the Cauchy--Schwarz inequality and~\eqref{eq-l1l2} we thus get
  \[
\absbig{\Ec(\ep^{-1}) - \hat \Ec(\ep^{-1})} \le \normv{\zbf-\hat\zbf}_1 \klabig{ \normv{\Labf \zbf}_1 + \normv{\Labf\hat\zbf}_1}.
\]
Lemmas~\ref{lemma-size} and~\ref{lemma-interface-z} together with the expansion~\eqref{eq-mfe-exp} finally yield the statement of the lemma. 
\end{proof}

\subsection{Proof of Theorem~\ref{thm-main}}

The proof of Theorem~\ref{thm-main} is obtained by patching many time intervals of length~$\ep^{-1}$ together to a long time interval of length~$\ep^{-N}$ as in \cite[Section~7.2]{Gauckler2017a}. On every time interval of the form $\ell \ep^{-1}\le t \le (\ell+1)\ep^{-1}$ we construct a modulated Fourier expansion of the numerical solution and get a corresponding almost-invariant that we denote by $\Ec_\ell(t)$ (in particular, we have $\Ec_0=\Ec$ and $\Ec_1=\hat\Ec$ with~$\Ec$ from Section~\ref{sec-invariants} and~$\hat\Ec$ from Section~\ref{subsec-invariant-next}). By Lemmas~\ref{lemma-invariant} and~\ref{lemma-interface-E} we have for all~$\ell$
\[
\absbig{ \Ec_\ell(t_n) - \Ec_0(t_0) } \le C (\ell+1) \ep^{N+1} \myfor \ell \ep^{-1}\le t_n=n h\le (\ell+1)\ep^{-1}
\]
provided that $\norm{\psi^0}_1\le\ep$ and $\norm{\psi^{m}}_1\le 2\ep$ for $t_m=\ep^{-1},2\ep^{-1},\ldots,\ell\ep^{-1}$. For $\ell+1\le \ep^{-N+2}$ (and hence $t_n\le \ep^{-N+1}$), this implies by Lemma~\ref{lemma-Ec-norm}
\[
\absbig{ \norm{\psi^n}_1^2 - \norm{\psi^0}_1^2} \le C \ep^3 \myfor \ell \ep^{-1}\le t_n=n h\le (\ell+1)\ep^{-1}.
\]
Besides proving near-conservation of the squared $H^1(\mathbb{T})$-norm as stated in Theorem~\ref{thm-main}, this also shows that the hypothesis $\norm{\psi^{m}}_1\le 2\ep$ also holds for $t_m=(\ell+1)\ep^{-1}$ provided that~$\ep$ is sufficiently small, and hence concludes the proof of Theorem~\ref{thm-main}.

\section{Conclusion}\label{sec-conclusion}

We have proven long-time near-conservation of energy and the squared $H^1$-norm for splitting integrators when applied to the cubic nonlinear Schr\"odinger equation in a weakly nonlinear regime. The main novelty is that the analysis includes all consistent splitting integrator with real-valued coefficients, in particular higher order splitting integrators.

As in previous papers, the given proof of long-time near-conservation of energy is based on a modulated Fourier expansion. The main difference is that higher order splitting integrators involve multiple nonlinear steps, which complicates the construction of such an expansion and in particular the derivation of suitable estimates in Sections~\ref{sec-mfe} and~\ref{sec-estimates}. A crucial point is to set up the system for the coefficients of the modulated Fourier expansion in such a way that the splitting structure can be immediately read off (see Equations~\eqref{eq-modsystem} and~\eqref{eq-modsystem-exp}), which is of particular importance for the derivation of the almost-invariant in Section~\ref{sec-invariants}. 

Focusing on these difficulties coming from higher order splitting integrators, we did not optimize the results with respect to the CFL-type step-size restriction~\eqref{eq-cfl}, but it is clear that a combination of the techniques presented here with those of~\cite{Gauckler2017a} lead to a weaker assumption in the form of a numerical non-resonance condition. In addition, the presented way to deal with higher order splitting integrators should also allow us to extend the results of~\cite{Faou2009a,Faou2009b,Gauckler2010a,Gauckler2010b} on long-time near-conservation of actions and energy in non-resonant situations and the results of~\cite{Faou2014} on long-time orbital stability of plane wave solutions to higher order splittings. We also expect that the techniques can be used to study higher order splitting integrators for semilinear wave equations, for which splitting integrators coincide with symplectic extended Runge--Kutta--Nystr\"om methods~\cite{Wu2013} as has been shown in~\cite{Blanes2015}.

\subsection*{Acknowledgement}

This work was supported by Deutsche Forschungsgemeinschaft through SFB 1114.

\end{document}